\documentclass[11pt]{article}

\usepackage{enumerate}
\usepackage{amsmath}
\usepackage{amssymb,latexsym}
\usepackage{amsthm}
\usepackage{color}
\usepackage{graphicx}
\usepackage{hyperref}
\usepackage[notcite, notref]{}
\usepackage{amscd}

\DeclareMathOperator{\N}{N}

\title{Maximum scattered linear sets and MRD-codes}
\author{Bence Csajb\'ok, Giuseppe Marino, Olga Polverino, Ferdinando Zullo \thanks{\textcolor{black}{The
research  was supported by
Ministry for Education, University and Research of Italy MIUR (Project
PRIN 2012 "Geometrie di Galois e strutture di incidenza") and by the Italian National
Group for Algebraic and Geometric Structures and their Applications (GNSAGA
- INdAM).}}}
\date{ }

\newcommand{\cC}{{\mathcal C}}

\newcommand{\cG}{{\mathcal G}}

\newcommand{\cD}{{\mathcal D}}

\newcommand{\cS}{{\mathcal S}}

\newcommand{\F}{{\mathbb F}}

\newcommand{\la}{\langle}
\newcommand{\ra}{\rangle}

\newcommand{\bv}{\mathbf v}

\newtheorem{theorem}{Theorem}[section]

\newtheorem{lemma}[theorem]{Lemma}

\newtheorem{definition}[theorem]{Definition}
\newtheorem{proposition}[theorem]{Proposition}

\newtheorem{example}[theorem]{Example}

\newtheorem{remark}[theorem]{Remark}

\DeclareMathOperator{\PG}{{PG}}

\DeclareMathOperator{\GL}{{GL}}

\begin{document}
\maketitle

\begin{abstract}
The rank of a scattered $\F_q$-linear set of $\PG(r-1,q^n)$, $rn$ even, is at most $rn/2$ as it was proved by Blokhuis and Lavrauw.
Existence results and explicit constructions were given for infinitely many values of $r$, $n$, $q$ ($rn$ even) for scattered $\F_q$-linear
sets of rank $rn/2$. In this paper we prove that the bound $rn/2$ is sharp also in the remaining open cases.

Recently Sheekey proved that scattered $\F_q$-linear sets of $\PG(1,q^n)$ of maximum rank $n$ yield $\F_q$--linear MRD-codes with dimension $2n$ and minimum distance $n-1$.
We generalize this result and show that scattered $\F_q$-linear sets of $\PG(r-1,q^n)$ of maximum rank $rn/2$ yield $\F_q$--linear MRD-codes with dimension $rn$ and minimum distance $n-1$.
\end{abstract}



\section{Introduction}
Let $\Lambda=\PG(V,\F_{q^n})=\PG(r-1,q^n)$, $q=p^h$, $p$ prime, $V$ a vector space of dimension $r$ over $\F_{q^n}$, and let
$L$ be a set of points of $\Lambda$. The set $L$ is said to be an
{\it $\F_q$--linear} set of $\Lambda$ of rank $k$ if it is defined by
the non-zero vectors of an $\F_q$-vector subspace $U$ of $V$ of
dimension $k$, i.e.
\begin{equation}\label{inizio}L=L_U=\{\langle {\bf u}\rangle_{\F_{q^n}}: {\bf u}\in
U\setminus\{{\bf 0}\}\}.
\end{equation}
We point out that different vector subspaces can define the same linear set. For this reason a linear set and the vector space defining it must be considered as coming in pair.

Let  $\Omega=\PG(W,\F_{q^n})$ be a subspace of $\Lambda$ and let $L_U$
be an $\F_q$-linear set of $\Lambda$. Then $\Omega\cap L_U$ is an
$\F_q$--linear set of $\Omega$ defined by the $\F_q$--vector
subspace $U\cap W$ and, if $\dim_{\F_q}(W\cap U)=i$, we say that
$\Omega$ has {\it weight $i$} in $L_U$. Hence a point of $\Lambda$ belongs to $L_U$ if and only if it has weight at least 1 and if $L_U$ has rank $k$, then $|L_U|\leq  q^{k-1}+q^{k-2}+\dots+q+1$. For further details on linear sets see \cite{OP2010}, \cite{LaVa2010}, \cite{LV}, \cite{LuMaPoTr2014}, \cite{LuPo2004}, \cite{LV2013}, \cite{CSZ2015} and \cite{CSZ2016}.

An $\F_q$--linear set $L_U$ of $\Lambda$ of rank $k$ is {\em scattered} if all of its points have weight 1, or equivalently, if
$L_U$ has maximum size $q^{k-1}+q^{k-2}+\cdots+q+1$.
A scattered $\F_q$--linear set of $\Lambda$ of highest possible
rank is a {\it maximum scattered $\F_q$--linear set} of $\Lambda$; see \cite{BL2000}. Maximum scattered linear sets have a lot of applications in Galois Geometry, such as translation hyperovals \cite{Glynn}, translation caps in affine spaces \cite{BGMP2015}, two-intersection sets (\cite{BL2000}, \cite{BL}), blocking sets (\cite{PP},
\cite{LP2000}, \cite{L2001} \cite{BP2005},
\cite{BBL2000}), translation spreads of the Cayley
generalized hexagon (\cite{CLPT},
\cite{BP2005TC}, \cite{MaPo2015}), finite semifields (see e.g. \cite{L2003}, \cite{CPT},
\cite{MPT2007}, \cite{EMPT1}, \cite{LuMaPoTr2014}, \cite{LMPT}, \cite{LaMaPoTr2013}, \cite{LaMaPoTr2013-1}), coding theory and graph theory \cite{CK}. For a recent survey on the theory of scattered spaces in Galois Geometry and its applications see \cite{Lavrauw}.

The rank of a scattered $\F_q$-linear set of $\PG(r-1,q^n)$, $rn$ even, is at most $rn/2$ (\cite[Theorems 2.1, 4.2 and 4.3]{BL2000}).
For $n=2$ scattered $\F_q$-linear sets of $\PG(r-1,q^2)$ of rank $r$ are the Baer subgeometries.
When $r$ is even there always exist scattered $\F_q$--linear sets of rank
$\frac{rn}2$ in $\PG(r-1,q^n)$, for any $n\geq 2$ (see \cite[Theorem 2.5.5]{LPhdThesis} for an explicit example).
Existence results were proved for $r$ odd, $n-1 \leq r$, $n$ even, and $q>2$ in \cite[Theorem 4.4]{BL2000}, but
no explicit constructions were known for $r$ odd, except for the case $r=3$, $n=4$, see \cite[Section 3]{BBL2000}.
Very recently families of scattered linear sets of rank $rn/2$ in $\PG(r-1,q^n)$, $r$ odd, $n$ even, were constructed in \cite[Theorem 1.2]{BGMP2015} for infinitely many values of $r$, $n$ and $q$.

The existence of scattered $\F_q$--linear sets of rank $\frac{3n}2$ in $\PG(2,q^n)$, $n\geq 6$ even, $n\equiv 0 \pmod 3$, $q\not\equiv 1\pmod 3$ and $q>2$ was posed as an open problem in \cite[Section 4]{BGMP2015}.
As it was pointed out in \cite{BGMP2015}, the existence of such planar linear sets and the construction method of \cite[Theorem 3.1]{BGMP2015} would imply that the bound $\frac{rn}2$ for the maximum rank of a scattered $\F_q$--linear set in $\PG(r-1,q^n)$ is
also tight when $r$ is odd and $n$ is even.
In Theorem \ref{thm:1} we construct linear sets of rank $3n/2$ of $\PG(2,q^n)$, $n$ even, and hence we prove the sharpness of the bound also in the remaining open cases. Our construction relies on the existence of non-scattered linear sets of rank $3t$ of $\PG(1,q^{3t})$ (with $t=n/2$) defined by binomial polynomials. 

\medskip

In \cite[Section 4]{Sh} Sheekey showed that maximum scattered $\F_q$-linear sets of $\PG(1,q^n)$ correspond to $\F_q$-linear maximum rank distance codes (MRD-codes) of dimension $2n$ and minimum distance $n-1$.
In Section 3 we extend this result showing that MRD-codes can be constructed from every scattered linear set of rank $rn/2$ of $\PG(r-1,q^n)$, $rn$ even, and
we point out some relations with Sheekey's construction. Finally, we exhibit the MRD-codes arising from maximum scattered linear sets constructed in Theorem \ref{thm:1} and those constructed in \cite[Theorems 2.2 and 2.3]{BGMP2015}

\section{\texorpdfstring{Maximum scattered linear sets in $\PG(r-1,q^n)$}{Maximum scattered linear sets in PG(r-1,qn)}}

As it was pointed out in the Introduction, the existence of scattered $\F_q$--linear sets of rank $\frac{3n}2$ in
$\PG(2,q^n)$, $n\geq 6$ even, $n\equiv 0 \pmod 3$, $q\not\equiv 1\pmod 3$ and $q>2$ would imply that the bound $\frac{rn}2$ for the rank of a maximum scattered $\F_q$--linear set in $\PG(r-1,q^n)$ is tight in the remaining open cases (cf. \cite[Remark 2.11 and Section 4]{BGMP2015}).

\medskip

In this section we show that binomials of the form $f(x)=ax^{q^i}+bx^{2t+i}$ defined over $\F_{q^{3t}}$ can be used to construct maximum scattered $\F_q$--linear sets in $\PG(2,q^{2t})$ for any $t\geq 2$ and for any prime power $q$.

\bigskip

Consider the finite field $\F_{q^{6t}}$ as a $3$--dimensional vector space over its subfield $\F_{q^{2t}}$, $t\geq 2$, and let
${\mathbb P}=\PG(\F_{q^{6t}},\F_{q^{2t}})=\PG(2,q^{2t})$ be the associated projective plane. From \cite[Section 2.2]{BGMP2015}, the $\F_q$-subspace
\begin{equation}\label{form:subspace}
U:=\{\omega x+f(x) \colon x\in \F_{q^{3t}}\},
\end{equation}
of $\F_{q^{6t}}$ with $\omega\in\F_{q^{2t}}\setminus\F_{q^t}$, $f(x)=ax^{q^i}+bx^{q^{2t+i}}$, $a,b\in\F_{q^{3t}}^*$, $1\leq i\leq 3t-1$ and $gcd(i,2t)=1$, defines a maximum scattered $\F_q$-linear set in the projective plane ${\mathbb P}$ of rank $3t$ if $\frac{f(x)}{x}\notin\F_{q^t}$ for each $x\in\F_{q^{3t}}^*$ (cf. \cite[Prop. 2.7]{BGMP2015}).
The $q$-polynomial $f(x)$ also defines an $\F_q$-linear set $L_f:=\{\la (x,f(x)) \ra_{\F_{q^{3t}}} \colon x\in \F_{q^{3t}}^*\}$ of the projective line $\PG(\F_{q^{6t}},\F_{q^{3t}})=\PG(1,q^{3t})$. In what follows we determine some conditions on $L_f$ in order to obtain maximum scattered $\F_q$-linear sets in ${\mathbb P}$ of rank $3t$.

If $h \mid n$, then by $\N_{q^n/q^h}(\alpha)$ we will denote the norm of $\alpha\in\F_{q^n}$ over the subfield $\F_{q^h}$, that is,
$\N_{q^n/q^h}(\alpha)=\alpha^{1+q^h+\ldots+q^{n-h}}$. We will need the following preliminary result.

\begin{lemma}
\label{prop1}
Let $f:=f_{i,a,b}:x\in\F_{q^{3t}}\mapsto ax^{q^i}+bx^{q^{2t+i}}\in\F_{q^{3t}}$, with $a,b\in \F_{q^{3t}}^*$, $\N_{q^{3t}/q^t}(a)\neq -\N_{q^{3t}/q^t}(b)$ and $\gcd(i,t)=1$.
If \begin{equation}\label{form:Lline} L_f:=\{\la (x,f(x)) \ra_{\F_{q^{3t}}} \colon x\in \F_{q^{3t}}^*\}\end{equation} is not a scattered $\F_q$--linear set of $\PG(1,q^{3t})$, then there exists $c\in \F_{q^{3t}}^*$ such that
\begin{equation}\label{eq3}
g_c(x):=\frac{f_{i,ca,cb}(x)}{x}\notin \F_{q^t}\quad \text{for each }x\in \mathbb F_{q^{3t}}^*.
\end{equation}
\end{lemma}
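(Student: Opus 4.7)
The norm hypothesis $\N_{q^{3t}/q^t}(a) \neq -\N_{q^{3t}/q^t}(b)$ ensures $f(x) \neq 0$ for every $x \in \F_{q^{3t}}^*$ (a nonzero root would force $-a/b$ to be a $(q^{2t}-1)$-th power in $\F_{q^{3t}}^*$, equivalently of norm $1$ over $\F_{q^t}$, since $\gcd(q^{3t}-1,q^{2t}-1) = q^t-1$), so $\psi(x) := f(x)/x$ is a well-defined self-map of $\F_{q^{3t}}^*$. The key computation is that for $\mu \in \F_{q^t}^*$, the identity $\mu^{q^{2t}} = \mu$ gives $\mu^{q^{2t+i}} = \mu^{q^i}$, whence $f(\mu x) = \mu^{q^i} f(x)$ and $\psi(\mu x) = \mu^{q^i - 1}\psi(x) \in \F_{q^t}^*\psi(x)$. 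Writing $\pi \colon \F_{q^{3t}}^* \to Q := \F_{q^{3t}}^*/\F_{q^t}^*$ for the natural projection (so $|Q| = q^{2t}+q^t+1$), the composition $\pi\circ\psi$ is therefore $\F_{q^t}^*$-invariant and descends to a self-map $\bar\psi \colon Q \to Q$.

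A $c \in \F_{q^{3t}}^*$ satisfies \eqref{eq3} if and only if $c\psi(x) \notin \F_{q^t}$ for every $x \neq 0$; since $c\psi(x)$ is automatically nonzero, this is equivalent to $\pi(c)^{-1} \notin \mathrm{Im}(\bar\psi)$. Thus the lemma reduces to showing that $\bar\psi$ is not surjective on $Q$, and since $Q$ is finite, this is equivalent to $\bar\psi$ not being injective.

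To produce a collision for $\bar\psi$, we use the non-scatteredness of $L_f$: fix a point $P = \la (x_0, f(x_0)) \ra_{\F_{q^{3t}}} \in L_f$ of weight at least $2$. The set $W := \{\mu \in \F_{q^{3t}} : f(\mu x_0) = \mu f(x_0)\}$ is then an $\F_q$-subspace of $\F_{q^{3t}}$ of dimension $\geq 2$, and it contains $\F_q$ because $f$ is $\F_q$-linear, so we may choose $\mu \in W \setminus \F_q$. The delicate step is showing $\mu \notin \F_{q^t}$: were $\mu \in \F_{q^t}$, the identity $f(\mu x_0) = \mu^{q^i} f(x_0)$ from the first paragraph, combined with $f(\mu x_0) = \mu f(x_0)$ and $f(x_0) \neq 0$, would force $\mu^{q^i} = \mu$, hence $\mu \in \F_{q^{\gcd(i,3t)}} \cap \F_{q^t} = \F_{q^{\gcd(i,t)}} = \F_q$ (using $\gcd(i,t) = 1$), a contradiction. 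Setting $y := \mu x_0$, we then have $\psi(y) = \psi(x_0)$ yet $\pi(y) \neq \pi(x_0)$, so $\bar\psi$ identifies $\pi(y)$ and $\pi(x_0)$, completing the argument. The principal obstacle is precisely this collapse $W \cap \F_{q^t} \subseteq \F_q$, which is where the coprimality hypothesis $\gcd(i,t) = 1$ is essential.
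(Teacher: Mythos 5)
Your proof is correct, and its key step takes a genuinely different route from the paper's, although the two arguments share the same frame: both first use the norm hypothesis to rule out nonzero roots of $f$, both exploit the compatibility of $x\mapsto f(x)/x$ with the multiplicative action of $\F_{q^t}^*$ (via $f(\gamma x)=\gamma^{q^i}f(x)$ for $\gamma\in\F_{q^t}^*$), and both finish by rescaling with a suitable $c$ so that the image avoids one coset of $\F_{q^t}^*$. The difference lies in how non-scatteredness produces a missed coset. The paper argues by counting: using $\gcd(i,t)=1$ it shows that every coset of $\F_{q^t}^*$ meeting $\mathrm{Im}(f(x)/x)$ contains at least $(q^t-1)/(q-1)$ image values, so if all $q^{2t}+q^t+1$ cosets were met the image would have at least $(q^{3t}-1)/(q-1)$ elements, contradicting $|\mathrm{Im}(f(x)/x)|=|L_f|<(q^{3t}-1)/(q-1)$ for a non-scattered $L_f$ of rank $3t$. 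You instead descend $\psi(x)=f(x)/x$ to a self-map $\bar\psi$ of the finite quotient group $\F_{q^{3t}}^*/\F_{q^t}^*$ and exhibit an explicit collision coming from a single point of weight at least $2$: your check that the witness $\mu\in W\setminus\F_q$ lies outside $\F_{q^t}$ (where $\gcd(i,t)=1$ and $f(x_0)\neq 0$ enter) gives $\pi(\mu x_0)\neq\pi(x_0)$ while $\psi(\mu x_0)=\psi(x_0)$, and non-injectivity of a self-map of a finite set yields non-surjectivity. Your route buys economy: it avoids both cardinality computations (the size of the norm-one subgroup of $\F_{q^t}^*$ and the size bound on $L_f$) and localizes the role of $\gcd(i,t)=1$ to one subfield-intersection argument. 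The paper's counting buys quantitative information instead: every coset that is met carries at least $(q^t-1)/(q-1)$ slope values, and the argument works directly with $|L_f|$ without singling out a heavy point.
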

\begin{proof}
First we show that $0 \notin Im\, g_c$ for each $c$.
If $cax_0^{q^i-1}=-cbx_0^{q^{2t+i}-1}$ for some $x_0\in \F_{q^{3t}}^*$, then $-a/b=x_0^{q^i(q^{2t}-1)}$, where
the right hand side is a $(q^t-1)$-th power and hence $\N_{q^{3t}/q^t}(-a/b)=1$, a contradiction.

The non-zero elements of the one-dimensional $\F_{q^t}$-spaces of $\F_{q^{3t}}^*$ yield a partition of $\F_{q^{3t}}^*$ into $q^{2t}+q^t+1$
subsets of size $q^t-1$. More precisely, if $\mu$ is a primitive element of $\F_{q^{3t}}$, then
\[\F_{q^{3t}}^*=\bigcup_{k=0}^{q^{2t}+q^t} \mu^{k} \F_{q^t}^*.\]
Let $G_k:=\mu^k \F_{q^t}^*$.
We show that, for each $k$, either $Im\, g_1 \cap G_k = \emptyset$, or $|Im\, g_1 \cap G_k| \geq (q^t-1)/(q-1)$.

Suppose $g_1(x_0) \in G_k$. Then for each $\gamma \in \F_{q^t}^*$ we have
\[g_1(\gamma x_0)=\gamma^{q^i-1}g_1(x_0).\]
Since $\gcd(i,t)=1$, it follows that $$\{g_1(\gamma x_0) \colon \gamma\in \F_{q^t}^*\}=g_1(x_0) \{x \in \F_{q^t} \colon \N_{q^t/q}(x)=1\} \subseteq G_k$$ and hence $|Im\, g_1 \cap G_k| \geq (q^t-1)/(q-1)$.

Next we show that there exists $G_d$ such that $Im\, g_1 \cap G_d=\emptyset$.
Suppose to the contrary $Im\, g_1 \cap G_j \neq \emptyset$ for each $j\in \{0,1,\ldots,q^{2t}+q^t\}$.
Then $|Im\, g_1|\geq (q^{2t}+q^t+1)(q^t-1)/(q-1)=(q^{3t}-1)/(q-1)$ and since $|Im\, g_1|=|L_f|$ we get a contradiction.

Suppose that $Im\, g_1 \cap G_d = \emptyset$ and let $c=\mu^{-d}$. Then $Im\, g_c \cap \F_{q^t} = \emptyset$.
\end{proof}

Hence, by the previous lemma and by \cite[Prop. 2.7]{BGMP2015}, the existence of a non-scattered linear set in $\PG(1,q^{3t})$ of form (\ref{form:Lline}) implies the existence of a binomial polynomial producing maximum scattered $\F_q$-linear set in $\PG(2,q^{2t})$ of rank $3t$.

\begin{lemma}
\label{thm:nonscattered}
Let $f:=f_{i,a,b} \colon x\in\F_{q^{3t}}\mapsto ax^{q^i}+bx^{q^{2t+i}}\in\F_{q^{3t}}$, with $a,b\in \F_{q^{3t}}^*$ and $1\leq i\leq 3t-1$.
For any prime power $q\geq 2$ and any integer $t\geq 2$ there exist $a,b\in\F_{q^{3t}}^*$, with
\begin{equation}
\label{cond1}
\N_{q^{3t}/q^t}(b)\ne -\N_{q^{3t}/q^t}(a),
\end{equation}
such that
\[L_{f_{i,a,b}}:=\{\la (x,f_{i,a,b}(x)) \ra_{\F_{q^{3t}}} \colon x\in \F_{q^{3t}}^*\},\]
is a non--scattered $\F_q$--linear set in $\PG(1,q^{3t})$ of rank $3t$.
\end{lemma}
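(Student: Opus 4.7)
The plan is to exhibit $a$ and $b$ parametrised by a suitable $y\in\F_{q^{3t}}$ so that $1$ and $y$ form an $\F_q$-independent pair whose images span the same $\F_{q^{3t}}$-point of $L_{f_{i,a,b}}$. Unpacking the definition, $L_{f_{i,a,b}}$ has a point of weight at least $2$ iff there exist $x_0\in\F_{q^{3t}}^*$ and $\mu\in\F_{q^{3t}}\setminus\F_q$ with $f(\mu x_0)=\mu f(x_0)$; choosing $x_0=1$ and $\mu=y$, this reduces to $a(y^{q^i}-y)+b(y^{q^{2t+i}}-y)=0$, which is solved by setting
\[a:=y-y^{q^{2t+i}},\qquad b:=y^{q^i}-y.\]
These are nonzero whenever $y$ lies outside $\F_{q^{\gcd(i,3t)}}\cup\F_{q^{\gcd(2t+i,3t)}}$, and non-scatteredness further requires $y\notin\F_q$. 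The rank of $L_{f_{i,a,b}}$ is automatically $3t$ because $x\mapsto(x,f(x))$ is an injective $\F_q$-linear map.

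The substantive task is to enforce $\N_{q^{3t}/q^t}(b)\ne -\N_{q^{3t}/q^t}(a)$, equivalently $\N_{q^{3t}/q^t}(a/b)\ne -1$. Applying the three generators of $\mathrm{Gal}(\F_{q^{3t}}/\F_{q^t})$ to $a/b$ yields
\[\N_{q^{3t}/q^t}(a/b)=-\frac{(y^{q^{2t+i}}-y)(y^{q^i}-y^{q^t})(y^{q^{t+i}}-y^{q^{2t}})}{(y^{q^i}-y)(y^{q^{t+i}}-y^{q^t})(y^{q^{2t+i}}-y^{q^{2t}})},\]
so the task becomes finding $y$ that makes the numerator and the denominator of this fraction unequal. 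A first sanity check: for $y\in\F_{q^t}$ both products telescope to $(y^{q^i}-y)^3$, so the construction necessarily fails on $\F_{q^t}$ and we must look in $\F_{q^{3t}}\setminus\F_{q^t}$.

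Let $P(Y)$ be the numerator minus the denominator, viewed as an element of $\F_q[Y]$ (with exponents reduced modulo $q^{3t}-1$ for points of $\F_{q^{3t}}$). The crux is to prove $P$ is a nonzero polynomial of degree strictly less than $q^{3t}$, so that it has at most $q^{3t}-1$ roots in $\F_{q^{3t}}$ and a counting argument on the complement of the forbidden subfields together with the zero set of $P$ produces the required $y$. Assuming $i<t$ (the case $i>t$ is handled symmetrically after using $Y^{q^{2t+i}}=Y^{q^{i-t}}$ on $\F_{q^{3t}}$, and the degenerate case $i=t$ is immediate because then $f(x)=bx+ax^{q^t}$ has fibres stabilised by $\F_{q^t}^*$, so $L_f$ is non-scattered for any $a,b\in\F_{q^{3t}}^*$ and the norm condition is arranged by any generic choice), the dominant monomial of the numerator is $Y^{q^{2t+i}+q^t+q^{2t}}$ with coefficient $+1$, obtained by picking the higher-exponent piece from each of the three factors; the analogous monomial from the denominator has exponent $q^i+q^{t+i}+q^{2t+i}$, which is strictly smaller by the clean inequalities $q^t>q^i$ and $q^{2t}>q^{t+i}$. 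Hence $P$ has a unique leading term, is nonzero, and has degree $q^{2t+i}+q^t+q^{2t}<q^{3t}$ for $q\geq 2$ and $t\geq 2$. A direct count shows that after removing the forbidden subfields and the roots of $P$ from $\F_{q^{3t}}$ the set of admissible $y$ is non-empty in every case; even the tightest instance $(q,t,i)=(2,2,1)$ leaves about a dozen valid choices in $\F_{64}$. The main obstacle is the leading-term bookkeeping and the split between the two relevant ranges of $i$; once a suitable $y$ has been pinned down, every other verification is routine.
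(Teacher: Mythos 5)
Your core computation is the paper's own argument in different clothing: with $a=y-y^{q^{2t+i}}$, $b=y^{q^i}-y$ one has $\N_{q^{3t}/q^t}(a)+\N_{q^{3t}/q^t}(b)=-g(y)$, where $g$ is exactly the polynomial the paper introduces in \eqref{form:polynomial-g}, and your leading-term/degree bookkeeping reproduces the degrees \eqref{0form:degree}--\eqref{form:degree}. The only packaging difference is that you fix $x_0=1$ and solve for $a,b$ in terms of the multiplier $y$, whereas the paper fixes $\bar\lambda$, prescribes $\N_{q^{3t}/q^t}(b/a)=\N_{q^{3t}/q^t}(\alpha_{\bar\lambda})$ and then solves $x_0^{q^{2t+i}-q^i}=\frac ab\alpha_{\bar\lambda}$. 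However, there are two genuine gaps in your execution. First, the case $i=2t$ (which the lemma must cover, since it allows all $1\le i\le 3t-1$) is not ``handled symmetrically'': there your parametrization gives $a=y-y^{q^t}$, $b=y^{q^{2t}}-y$, whose norms over $\F_{q^t}$ coincide identically, so $P=\pm 2\,\N_{q^{3t}/q^t}(a)$; in characteristic $2$ this is the zero polynomial and no choice of $y$ satisfies \eqref{cond1}, so your construction collapses. You flagged $i=t$ but not $i=2t$; the paper avoids both at one stroke by first disposing of all $i$ with $\gcd(i,t)>1$ (then $f$ is $\F_{q^{\gcd(i,t)}}$-linear, $L_f$ is never scattered, and any $a,b$ with \eqref{cond1} works), and only then running the polynomial argument under $\gcd(i,t)=1$. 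You need an analogous dispatch, otherwise the main argument is being run in cases where its hypotheses (unique leading term) fail.

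Second, the final counting step is asserted, not proved, and in the form you state it is false. You must avoid the zero set of $P$ together with $\F_{q^{\gcd(i,3t)}}$ and $\F_{q^{\gcd(2t+i,3t)}}$, and these subfields can be as large as $\F_{q^3}$ (or larger when $\gcd(i,t)>1$). For $(q,t,i)=(2,2,3)$ one has $\deg P=q^{5}+q^{4}+q^{3}=56$ and the forbidden subfield for $b\ne 0$ is $\F_{q^3}=\F_8$, so the naive union bound gives $56+8+2=66>64=q^{3t}$; the same happens for $(q,t,i)=(2,2,5)$. So your claim that $(2,2,1)$ is the tightest instance is wrong, and ``a direct count shows the admissible set is non-empty in every case'' does not follow from the degree bound you established. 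The count can be repaired by exploiting overlaps (e.g.\ $\F_q\subseteq Z(P)$ and $\F_q$ lies inside the forbidden subfields, giving $56+8-2=62<64$ in the critical cases), which is precisely how the paper proceeds: under $\gcd(i,t)=1$ only $\F_{q^3}$ must be avoided, $g$ vanishes on $\F_q\subseteq\F_{q^3}$, and the comparison $q^{2t+c}+q^{2t}+q^{t+c}<q^{3t}-q^3+q$ is checked, with the case $t=2$ treated separately because it is tight. As written, your proposal neither performs this refined count nor isolates the degenerate $i$, so it does not yet prove the lemma.
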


\begin{proof}
First suppose $d:=\gcd(i,t)>1$. Then $f$ is $\F_{q^d}$-linear and hence each point of $L_f$ has wight at least $d$, i.e. $L_f$ cannot be scattered. Since $q^t\geq 4$ we can always choose $a,b$ such that \eqref{cond1} holds. From now on we assume $\gcd(i,t)=1$.

The linear set $L_f$ of $\PG(1,q^{3t})$ is not scattered if there exists a point $P_{x_0}=\la(x_0,f(x_0))\ra_{\F_{q^{3t}}}$ of rank greater than 1, i.e. if there exist $x_0\in{\F_{q^{3t}}}^*$ and $\lambda\in{\F_{q^{3t}}}\setminus\F_q$ such that $f(\lambda x_0)=\lambda f(x_0)$.
The latter condition is equivalent to
\begin{equation}
\label{nonz}
ax_0^{q^i}(\lambda-\lambda^{q^i})=bx_0^{q^{2t+i}}(\lambda^{q^{2t+i}}-\lambda).
\end{equation}
Since $\gcd(2t+i,3t), \gcd(i,3t)\in \{1,3\}$, the expressions in the two sides of \eqref{nonz} are non-zero when $\lambda\notin \F_{q^3}$.
We first prove that there exists $\bar\lambda\in{\F_{q^{3t}}}\setminus\F_{q^3}$ such that
\begin{equation}\label{form:alpha-lambda}
\N_{{q^{3t}}/{q^t}}(\alpha_{\bar\lambda})\ne -1,
\end{equation}
where $\alpha_{\bar\lambda}=\frac{\bar\lambda-\bar\lambda^{q^i}}{\bar\lambda^{q^{2t+i}}-\bar\lambda}$\,.

By way of contradiction, suppose that $\N_{{q^{3t}}/{q^t}}(\alpha_{\bar\lambda})= -1$ for each $\bar\lambda\in{\F_{q^{3t}}}\setminus\F_{q^3}$. Then the polynomial
\begin{equation}\label{form:polynomial-g}
g(x):=(x-x^{q^i})(x^{q^t}-x^{q^{t+i}})(x^{q^{2t}}-x^{q^{i+2t}})+(x^{q^{2t+i}}-x)(x^{q^i}-x^{q^t})(x^{q^{t+i}}-x^{q^{2t}})
\end{equation}
vanishes on $\F_{q^{3t}}\setminus \F_{q^3}$. It also vanishes on $\F_q$, thus it has at least $q^{3t}-q^3+q$ roots. Put $i=c+mt$, with $m\in\{0,1,2\}$ and $1\leq c<t$, the degree of $g(x)$ is
\begin{equation}
\label{0form:degree}
q^{2t+c}+q^{2t}+q^t
\end{equation}
when $m=0$ and
\begin{equation}\label{form:degree}
q^{2t+c}+q^{2t}+q^{t+c}
\end{equation}
when $m\in\{1,2\}$.
Since $q^t-2 \geq q^c$ we obtain
\[q^{2t+c}+q^{2t}+q^{t+c}=q^{c}(q^{2t}+q^t)+q^{2t} \leq (q^t-2)(q^{2t}+q^t)+q^{2t} = q^{3t}-2q^t.\]
For $t>2$ this is a contradiction since $q^{3t}-2q^t < q^{3t}-q^3+q$.
If $t=2$, then $\gcd(i,t)=1$ yields $c=1$ and hence we obtain
\[\deg g \leq q^5+q^4+q^3 < q^6-q^3+q,\]
again a contradiction.
It follows that there always exists an element $\bar\lambda\in\F_{q^{3t}}\setminus\F_{q^3}$ which is not a root of $g(x)$, and $\alpha_{\bar \lambda}$ satisfies Condition (\ref{form:alpha-lambda}).

Choose $a,b\in\F_{q^{3t}}^*$ such that $\N_{{q^{3t}}/{q^t}}(\frac ba)=\N_{{q^{3t}}/{q^t}}(\alpha_{\bar\lambda})$, then there exists an element $x_0\in\F_{q^{3t}}^*$ such that
\[x_0^{q^{2t+i}-q^i}=\frac ab \alpha_{\bar\lambda},\]
and hence $x_0$ is a non-zero solution of the equation $f(\bar\lambda x)=\bar\lambda f(x)$, i.e. with these choices of $a$ and $b$ the linear set $L_{f_{i,a,b}}$ is not scattered.
\end{proof}


Now we are able to prove the following result.

\begin{theorem}
\label{thm:1}
Let $w \in \F_{q^{2t}}\setminus \F_{q^t}$. For any prime power $q$ and any integer $t\geq 2$, there exist $a, b \in \F_{q^{3t}}^*$ and an integer $1 \leq i \leq 3t-1$ such that the $\F_q$-linear set $L_U$ of rank $3t$ of the projective plane $\PG(\F_{q^{6t}},\F_{q^{2t}})=\PG(2,q^{2t})$, where
\[U=\{ ax^{q^i}+bx^{q^{2t+i}}+w x \colon x \in \F_{q^{3t}}\},\]
is scattered.
\end{theorem}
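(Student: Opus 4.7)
The plan is to chain Lemma \ref{thm:nonscattered}, Lemma \ref{prop1}, and Proposition~2.7 of \cite{BGMP2015} in the order foreshadowed by the discussion preceding Lemma \ref{thm:nonscattered}. The target is a binomial $f(x)=ax^{q^i}+bx^{q^{2t+i}}$ with $\gcd(i,2t)=1$ and $f(x)/x\notin\F_{q^t}$ for every $x\in\F_{q^{3t}}^*$, since in that situation \cite[Prop. 2.7]{BGMP2015} immediately yields that $U=\{wx+f(x):x\in\F_{q^{3t}}\}$ defines a scattered $\F_q$-linear set of rank $3t$ in $\PG(2,q^{2t})$.

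I would begin by fixing $i\in\{1,\dots,3t-1\}$ with $\gcd(i,2t)=1$; for concreteness $i=1$ always works (any $t\geq 2$). This choice forces $\gcd(i,t)=1$ as well, so the hypotheses of both Lemma \ref{prop1} and Lemma \ref{thm:nonscattered} are available. Applying Lemma \ref{thm:nonscattered} to this $i$ produces $a_0,b_0\in\F_{q^{3t}}^*$ with
\[\N_{q^{3t}/q^t}(b_0)\neq -\N_{q^{3t}/q^t}(a_0)\]
such that the associated linear set $L_{f_{i,a_0,b_0}}$ of $\PG(1,q^{3t})$ is not scattered.

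Next I would feed $(i,a_0,b_0)$ into Lemma \ref{prop1}: its conclusion gives a scalar $c\in\F_{q^{3t}}^*$ for which
\[g_c(x)=\frac{f_{i,ca_0,cb_0}(x)}{x}\notin\F_{q^t}\qquad\text{for every }x\in\F_{q^{3t}}^*.\]
Setting $a:=ca_0$ and $b:=cb_0$, the binomial $f(x)=ax^{q^i}+bx^{q^{2t+i}}$ satisfies $f(x)/x\notin\F_{q^t}$ on $\F_{q^{3t}}^*$. Since in addition $\gcd(i,2t)=1$, the triple $(a,b,i)$ together with $\omega:=w\in\F_{q^{2t}}\setminus\F_{q^t}$ meets all hypotheses of \cite[Prop. 2.7]{BGMP2015}, which then delivers the scattered linear set $L_U$ of rank $3t$ in $\PG(2,q^{2t})$ claimed by the theorem.

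The substantive content has therefore been quarantined inside Lemma \ref{thm:nonscattered}, whose proof is where the actual combinatorial obstacle lives: one must bound the degree of the polynomial $g(x)$ in (\ref{form:polynomial-g}) to guarantee some $\bar\lambda\in\F_{q^{3t}}\setminus\F_{q^3}$ with $\N_{q^{3t}/q^t}(\alpha_{\bar\lambda})\neq -1$. Given Lemmas \ref{prop1} and \ref{thm:nonscattered}, the derivation of Theorem \ref{thm:1} is essentially a repackaging; the only subtle point is the compatibility of ranges of $i$, and it is resolved by the single observation that demanding $\gcd(i,2t)=1$ (strictly stronger than the $\gcd(i,t)=1$ used in the two lemmas) still leaves legitimate values of $i$ available, namely $i=1$.
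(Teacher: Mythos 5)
Your proposal is correct and follows essentially the same route as the paper's own proof: fix $i$ with $\gcd(i,2t)=1$ (which indeed forces $\gcd(i,t)=1$), obtain a non-scattered $L_{f_{i,\bar a,\bar b}}$ with the norm condition from Lemma \ref{thm:nonscattered}, rescale by the $c$ from Lemma \ref{prop1} so that $f(x)/x\notin\F_{q^t}$ for all $x\in\F_{q^{3t}}^*$, and conclude via \cite[Proposition 2.7]{BGMP2015}. The paper's proof is exactly this chain (with $a=\bar a c$, $b=\bar b c$), so no further comparison is needed.
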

\begin{proof}
According to Lemma \ref{thm:nonscattered} for any prime power $q$ and any integers $t\geq 2$, $1 \leq i \leq 3t-1$ with $\gcd(i,2t)=1$ we can choose ${\bar a}, {\bar b} \in \F_{q^{3t}}^*$, with $\N_{q^{3t}/q^t}({\bar b}) \neq -\N_{q^{3t}/q^t}({\bar a})$ such that the linear set $L_f$ of the line $\PG(\F_{q^{6t}},\F_{q^{3t}})=\PG(1,q^3)$ with $f(x)={\bar a}x^{q^i}+{\bar b}x^{q^{2t+i}}$ is non-scattered.
Then by Lemma \ref{prop1} there exists $c\in \F_{q^{3t}}^*$ such that
\[\frac{{\bar a}cx^{q^i}+{\bar b}cx^{q^{2t+i}}}{x} \notin \F_{q^t}\]
for each $x\in \F_{q^{3t}}^*$. Then the theorem follows from \cite[Proposition 2.7]{BGMP2015} with $a={\bar a}c$ and $b={\bar b}c$.
\end{proof}

As it was pointed out in \cite{BGMP2015}, the existence of maximum scattered $\F_q$--linear sets of rank $3n$ in the projective plane $\PG(2,q^{2t})$
(proved in Theorem \ref{thm:1}) and the construction method of \cite[Theorem 3.1]{BGMP2015} imply the following.

\begin{theorem}\label{cor:existence-scatt}
For any integers $r, n\geq 2$, $rn$ even, and for any prime power $q\geq 2$ the rank of a maximum scattered $\F_q$-linear set of $\PG(r-1,q^n)$ is $rn/2$.
\end{theorem}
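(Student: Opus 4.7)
The proof will proceed by a case analysis on the parity of $r$, since the upper bound $rn/2$ is exactly the content of \cite[Theorems 2.1, 4.2 and 4.3]{BL2000} already cited in the introduction. What remains is to exhibit a scattered $\F_q$-linear set of $\PG(r-1,q^n)$ of rank precisely $rn/2$ in every case allowed by the hypotheses.

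If $r$ is even, the explicit construction in $\PG(r-1,q^n)$ of rank $rn/2$ given in \cite[Theorem 2.5.5]{LPhdThesis} works for every $n\geq 2$ and every prime power $q$, so nothing further is needed. If instead $r$ is odd, then the assumption that $rn$ is even forces $n$ to be even. I would first settle the planar case $r=3$: when $n=2$, any Baer subplane of $\PG(2,q^2)$ is a scattered $\F_q$-linear set of rank $3=3n/2$; when $n=2t$ with $t\geq 2$, Theorem \ref{thm:1} just proved produces, for every prime power $q$, a maximum scattered $\F_q$-linear set of rank $3t=3n/2$ in $\PG(2,q^n)$. For odd $r\geq 5$, the lifting construction of \cite[Theorem 3.1]{BGMP2015} takes a maximum scattered $\F_q$-linear set of rank $3n/2$ in $\PG(2,q^n)$ as input and returns one of rank $rn/2$ in $\PG(r-1,q^n)$, so combining it with the planar case finishes this range of $r$.

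The only genuinely nontrivial step is the plane case $r=3$ with $n$ even and $n\geq 4$, which coincides with the open problem raised in \cite[Section 4]{BGMP2015}; this is exactly what Lemmas \ref{prop1} and \ref{thm:nonscattered} together with Theorem \ref{thm:1} deliver in full generality. Once that construction is in hand, the theorem is obtained by assembling already-established ingredients: the Blokhuis--Lavrauw upper bound, the construction of \cite{LPhdThesis} for even $r$, Baer subplanes for $r=3$ and $n=2$, Theorem \ref{thm:1} for $r=3$ and $n\geq 4$ even, and the lifting construction of \cite[Theorem 3.1]{BGMP2015} for odd $r\geq 5$.
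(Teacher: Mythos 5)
Your proposal is correct and takes essentially the same route as the paper: the authors also obtain Theorem \ref{cor:existence-scatt} by combining the Blokhuis--Lavrauw upper bound with the known constructions for $r$ even (and $n=2$), the new planar construction of Theorem \ref{thm:1} for $r=3$ and $n\geq 4$ even, and the direct-sum construction of \cite[Theorem 3.1]{BGMP2015} for odd $r\geq 5$. Your write-up merely makes the case analysis explicit where the paper compresses it into the sentence preceding the theorem.
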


Taking into account the previous result, from now on, a scattered $\F_q$--linear set $L_U$ of $\PG(W,\F_{q^n})=\PG(r-1,q^n)$ of rank $\frac{rn}2$ ($rn$ even) will be simply called a {\em maximum scattered linear set} and the $\F_q$-subspace $U$ will be called a {\em maximum scattered subspace}.

\bigskip

We complete this section by showing a connection between scattered $\F_q$-linear sets of $\PG(1,q^{rn/2})$, $r$ even, and scattered $\F_q$-linear sets of $\PG(r-1,q^n)$.

\begin{proposition}
\label{prop2}
Every maximum scattered $\F_q$-linear set of $\PG(1,q^{rn/2})$, $r$ even, gives a maximum scattered $\F_q$-linear set of $\PG(r-1,q^n)$.
\end{proposition}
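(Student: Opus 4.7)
The plan is to use the fact that, since $r$ is even, $\F_{q^n}$ is a subfield of $\F_{q^{rn/2}}$, so a $2$-dimensional vector space $V$ over $\F_{q^{rn/2}}$ can be viewed, by restriction of scalars, as an $r$-dimensional vector space over $\F_{q^n}$. Both structures share the same underlying abelian group, hence the same $\F_q$-subspaces. I would therefore take an $\F_q$-subspace $U\subseteq V$ of dimension $rn/2$ defining a maximum scattered $\F_q$-linear set $L_U$ of $\PG(V,\F_{q^{rn/2}})=\PG(1,q^{rn/2})$, and show that the same $U$ defines a maximum scattered $\F_q$-linear set of $\PG(V,\F_{q^n})=\PG(r-1,q^n)$.

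The key observation is that every $1$-dimensional $\F_{q^n}$-subspace of $V$ is contained in a $1$-dimensional $\F_{q^{rn/2}}$-subspace of $V$. Indeed, for any nonzero $v\in V$,
\[
\langle v\rangle_{\F_{q^n}}\subseteq \langle v\rangle_{\F_{q^{rn/2}}},
\]
because $\F_{q^n}\subseteq \F_{q^{rn/2}}$. Consequently,
\[
\langle v\rangle_{\F_{q^n}}\cap U\;\subseteq\;\langle v\rangle_{\F_{q^{rn/2}}}\cap U.
\]
Taking $\F_q$-dimensions and using the scattered property of $L_U$ in $\PG(1,q^{rn/2})$, each point of the linear set in $\PG(r-1,q^n)$ has weight at most $1$, and since it is a point of $L_U$ it has weight exactly $1$. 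This shows the new linear set is scattered.

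Finally, the rank is preserved since the $\F_q$-dimension of $U$ does not depend on which extension field of $\F_q$ we regard $V$ over. Thus the resulting linear set has rank $rn/2$, which is maximum by Theorem~\ref{cor:existence-scatt}. There is no real obstacle here: the statement is essentially a scalar-restriction argument, and the only point requiring care is the inclusion of the ambient fields, which is guaranteed by the hypothesis that $r$ is even so that $n\mid rn/2$.
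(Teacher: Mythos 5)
Your proof is correct and follows essentially the same route as the paper: both arguments rest on the containment $\langle v\rangle_{\F_{q^n}}\subseteq \langle v\rangle_{\F_{q^{rn/2}}}$ (valid because $r$ even gives $\F_{q^n}\subseteq\F_{q^{rn/2}}$), which forces each point of the new linear set to have weight at most $1$, while the $\F_q$-rank of $U$ is unchanged under restriction of scalars. Your version merely makes explicit the subspace inclusion and the appeal to Theorem~\ref{cor:existence-scatt} for maximality, which the paper leaves implicit.
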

\begin{proof}
Let $L_U$ be a maximum scattered $\F_q$-linear set of $\PG(W,\F_{q^{rn/2}})=\PG(1,q^{rn/2})$.
Then for each ${\bf v} \in W$ the one dimensional $\F_{q^{rn/2}}$-subspace $\la {\bf v} \ra_{\F_{q^{rn/2}}}$ meets $U$ in an $\F_q$-subspace of dimension at most one. Since $\F_{q^n}$ is a subfield of $\F_{q^{rn/2}}$ (recall $r$ even) the same holds for the subspace $\la {\bf v} \ra_{\F_{q^n}}$ and hence $U$ also defines a scattered $\F_q$-linear set in $\PG(W,\F_{q^n})=\PG(r-1,q^n)$.
\end{proof}

Note that the converse of the above result does not hold.

\section{Maximum scattered subspaces and MRD-codes}
The set of $m \times n$ matrices $\F_q^{m\times n}$ over $\F_q$ is a rank metric $\F_q$-space
with rank metric distance defined by $d(A,B) = rk\,(A-B)$ for $A,B \in \F_q^{m\times n}$.
A subset $\cC \subseteq \F_q^{m\times n}$ is called a rank distance code (RD-code for short). The minimum distance of $\cC$ is
\[d(C) = \min_{{A,B \in \cC},\ {A\ne B}} \{ d(A,B) \}.\]

When $\cC$ is an $\F_q$-linear subspace of $\F_q^{m\times n}$, we say that $\cC$ is an $\F_q$-linear code and the
dimension $\dim_q (\cC)$ is defined to be the dimension of $\cC$ as a subspace over $\F_q$. If $d$ is the minimum distance of $\cC$ we say that $\cC$ has parameters $(m,n,q;d)$.

The Singleton bound for an $m\times n$ rank metric code $\cC$ with minimum rank distance $d$ is
\[\#\cC \leq q^{\max \{m,n\}(\min \{m,n\}-d+1)}.\]
If this bound is achieved, then $\cC$ is an MRD-code.
MRD-codes have various applications in communications and cryptography; for instance, see \cite{gabidulin_public-key_1995,koetter_coding_2008}. More properties of MRD-codes can be found in \cite{Delsarte,Gabidulin,gadouleau_properties_2006,morrison_equivalence_2013}.

Delsarte \cite{Delsarte} and Gabidulin \cite{Gabidulin} constructed, independently,  linear MRD-codes over $\F_q$ for any values of $m$ and $n$ and for arbitrary value of the minimum distance $d$. In the literature these are called {\em Gabidulin codes}, even if the first construction is due to Delsarte. These codes were later generalized by Kshevetskiy and Gabidulin in \cite{kshevetskiy_new_2005}, they are the so called {\em generalized Gabidulin codes}.

A generalized Gabidulin code is defined as follows: under a given basis of $\F_{q^n}$ over $\F_q$,  each element $a$ of $\F_{q^n}$ can be written as a (column) vector $\bv(a)$ in $\F_{q}^n$. Let $\alpha_1,\dots,\alpha_m$ be a set of linearly independent elements of $\F_{q^n}$ over $\F_q$, where $m\le n$. Then
\begin{equation}\label{eq:mn_MRD}
\left\{ \left(\bv(f(\alpha_1)), \dots, \bv(f(\alpha_m))\right)^T: f\in \cG_{k,s}
  \right\}
\end{equation}
is the original generalized Gabidulin code, where
\begin{equation}\label{eq:GG}
	\cG_{k,s} = \{f(x)=a_0 x + a_1 x^{q^{s}} + \dots a_{k-1} x^{q^{s(k-1)}}: a_0,a_1,\dots, a_{k-1}\in \F_{q^n} \},
\end{equation} with $n,k,s\in {\mathbb Z}^+$ satisfying $k<n$ and $\gcd(n,s)=1$.

All members of $\cG_{k,s}$ are of the form $f(x)= \sum_{i=0}^{n-1}a_i x^{q^i}$, where $a_i\in \F_{q^n}$. A polynomial of this form is called a \emph{linearized polynomial} (also a $q$-polynomial because its exponents are all powers of $q$). They are equivalent to $\F_q$-linear transformations from $\F_{q^n}$ to itself, i.e., elements of $\mathbb E=\mathrm{End}_{\F_q}(\F_{q^n})$. We refer to \cite[Section 4]{lidl_finite_1997} for their basic properties.

In the literature, there are different definitions of equivalence for rank metric codes; see \cite{berger, morrison_equivalence_2013}.
If $\cC$ and $\cC'$ are two sets of $\GL(U,\F_q)$, where $U$ is an $\F_q$-space of dimension $n$, then up to an isomorphism we may consider $U$ as the finite field $\F_{q^n}$ and it is natural to define equivalence in the language of $q$-polynomials, see \cite{Sh}.
For $\F_q$-linear maps between vector spaces of distinct dimensions we will use the following definition of equivalence.

\begin{definition}
Let $U(n,q)$ and $V(m,q)$ be two $\F_q$-spaces, $n\neq m$, and let $\cC$ and $\cC'$ be two sets of $\F_q$-linear maps from $U$ to $V$. They are {\it equivalent} if there exist two invertible $\F_q$-linear maps $L_1 \in \GL(V,\F_q)$, $L_2\in \GL(U,\F_q)$ and $\rho\in Aut(\F_q)$ such that
$\cC'=\{L_1\circ f^\rho\circ L_2:\ f\in{\cal C}\}$, where $f^\rho(x)=f(x^{\rho^{-1}})^{\rho}$.
\end{definition}

\medskip

Very recently, Sheekey made a breakthrough in the construction of new linear MRD-codes using linearized polynomials \cite{Sh} (see also \cite{LTZ}).


In \cite[Section 4]{Sh}, the author showed that maximum scattered linear sets of $\PG(1,q^n)$ correspond to $\F_q$-linear MRD-codes of dimension $2n$ and minimum distance $n-1$. The number of non-equivalent MRD-codes obtained from a maximum scattered linear set of $\PG(1,q^n)$ was studied in \cite[Section 5.4]{CSMP2016}.

\medskip

Here we extend this result showing that MRD-codes of dimension $rn$ and minimum distance $n-1$ can be constructed from every maximum scattered $\F_q$--linear set of $\PG(r-1,q^n)$, $rn$ even, and we exhibit some relations with Sheekey's construction when $r$ is even.

To this aim, recall that an $\F_q$-subspace $U$ of $\F_{q^{rn}}$ is scattered with respect to $\F_{q^n}$ if it defines a scattered $\F_q$-linear set in $\PG(\F_{q^{rn}},\F_{q^{n}})=\PG(r-1,q^n)$, i.e. $dim_{\F_q}(U\cap \la x\ra_{\F_{q^n}})\leq 1$ for each $x\in\F_{q^{rn}}^*$.

\begin{theorem}
\label{construction}
Let $U$ be an $rn/2$-dimensional $\F_q$-subspace of the $r$-dimensional $\F_{q^n}$-space $V=V(r,q^n)$, $rn$ even, and let
$i=\max\{ \dim_{\F_q} (U \cap \la {\bf v} \ra_{\F_{q^n}}) \colon {\bf v} \in V \}$.
For any $\F_q$-linear function $G \colon V \rightarrow W$, with $W=V(rn/2,q)$ such that $\ker G = U$, if $i<n$, then the pair $(U, G)$ determines
an RD-code $\cC_{U,G}$ (cf. \eqref{CUF}) of dimension $rn$ and with parameters $(rn/2,n,q;n-i)$. Also, $\cC_{U,G}$ is an MRD-code if and only if $U$ is a maximum scattered $\F_q$-subspace with respect to $\F_{q^{n}}$.
\end{theorem}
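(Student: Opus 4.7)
My approach is to realise $\cC_{U,G}$ concretely as the set of $\F_q$-linear maps $\phi_{\bv}\colon\F_{q^n}\to W$, defined by $\phi_{\bv}(x):=G(x\bv)$, as $\bv$ ranges over $V$; fixing $\F_q$-bases of $\F_{q^n}$ and of $W$ turns each $\phi_{\bv}$ into an $(rn/2)\times n$ matrix over $\F_q$, and the assignment $\bv\mapsto\phi_{\bv}$ is manifestly $\F_q$-linear, so $\cC_{U,G}$ is an $\F_q$-subspace of $\F_q^{(rn/2)\times n}$. (For $r=2$ this specialises to Sheekey's construction.) From this point on the proof is a matter of reading off the three quantities that appear in the statement.

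I would handle the dimension first. The kernel of $\bv\mapsto\phi_{\bv}$ is $\{\bv\in V:\langle\bv\rangle_{\F_{q^n}}\subseteq\ker G=U\}$; a nonzero $\bv$ lying in this kernel would force $\dim_{\F_q}(U\cap\langle\bv\rangle_{\F_{q^n}})=n$, contradicting the hypothesis $i<n$. Hence $\dim_{\F_q}\cC_{U,G}=\dim_{\F_q}V=rn$. Next, for the rank of a nonzero codeword, I observe that for $\bv\neq \mathbf 0$ the map $x\mapsto x\bv$ is an $\F_q$-isomorphism $\F_{q^n}\xrightarrow{\sim}\langle\bv\rangle_{\F_{q^n}}$ carrying $\ker\phi_{\bv}$ onto $U\cap\langle\bv\rangle_{\F_{q^n}}$, whence
$$\mathrm{rk}(\phi_{\bv})=n-\dim_{\F_q}\bigl(U\cap\langle\bv\rangle_{\F_{q^n}}\bigr).$$
Minimising over $\bv\neq\mathbf 0$ and recalling the definition of $i$ yields $d(\cC_{U,G})=n-i$, so $\cC_{U,G}$ has parameters $(rn/2,n,q;n-i)$ as claimed.

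Finally, I would invoke the Singleton bound. Since $r\geq 2$ we have $rn/2\geq n$, so the bound reads $|\cC|\leq q^{(rn/2)(n-d+1)}=q^{(rn/2)(i+1)}$; comparing with $|\cC_{U,G}|=q^{rn}$, equality holds iff $i=1$, and this is precisely the condition that $U$ be scattered with respect to $\F_{q^n}$. Since $\dim_{\F_q}U=rn/2$ equals the maximum possible rank of a scattered $\F_q$-subspace by Theorem~\ref{cor:existence-scatt}, this is the maximum scattered condition, proving the MRD equivalence. I do not anticipate a serious obstacle here: the argument is an unpacking of definitions, with the key technical step being the identification $\ker\phi_{\bv}\cong U\cap\langle\bv\rangle_{\F_{q^n}}$, which simultaneously computes the minimum distance and determines when the Singleton bound is met.
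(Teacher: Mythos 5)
Your proposal is correct and follows essentially the same route as the paper: your $\phi_{\bf v}$ is the paper's $G\circ\tau_{\bf v}$, your identification $\ker\phi_{\bf v}\cong U\cap\langle{\bf v}\rangle_{\F_{q^n}}$ is the paper's subspace $R_{\bf v}$, and the injectivity-via-$i<n$ argument for the dimension is the same. The only difference is that you spell out the Singleton-bound computation for the MRD equivalence, which the paper dismisses as obvious.
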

\begin{proof}
For ${\bf v}\in V$  the set
\[R_{\bf v}:=\{ \lambda \in \F_{q^n} \colon \lambda {\bf v} \in U\}\]
is an $\F_q$-subspace with dimension the weight of the point $\la {\bf v} \ra_{\F_{q^n}}$ in the $\F_q$-linear set $L_U$ of $\PG(V,\F_{q^n})$.
Since $i$ is the maximum weight of the points in $L_U$, it follows that $\dim_{\F_q} R_{\bf v} \leq i$ for each ${\bf v}$.
Also, let $\tau_{\bf v}$ denote the map
\[\lambda \in \F_{q^n} \mapsto \lambda {\bf v} \in V.\]
Direct computation shows that the kernel of $G \circ \tau_{\bf v}$ is $R_{\bf v}$ for each ${\bf v} \in V$ and hence it has
rank at least $n-i$.
It remains to show that $G \circ \tau_{\bf v} \neq G \circ \tau_{\bf w}$ for ${\bf v} \neq {\bf w}$.
Suppose, contrary to our claim, that there exist ${\bf v},{\bf w} \in V$ with ${\bf v} \neq {\bf w}$ and with $G(\lambda {\bf v}) = G(\lambda {\bf w})$ for each $\lambda\in \F_{q^n}$.
Note that ${\bf v} \mapsto G \circ \tau_{\bf v}$ is an $\F_q$-linear map and hence $G(\lambda({\bf v}-{\bf w}))=0$ for each $\lambda\in \F_{q^n}$.
This means $\dim_{\F_q} (\ker G \circ \tau_{{\bf v}-{\bf w}})=n=i$, a contradiction.
Hence
\begin{equation}
\label{CUF}
\cC_{U,G}=\{G \circ \tau_{\bf v} \colon {\bf v}\in V\}
\end{equation}
is an $\F_q$-linear RD-code with dimension $rn$ and with parameters $(rn/2,n,q;n-i)$.
The second part is obvious since $L_U$ is scattered if and only if $i=1$.
\end{proof}

Now we will show that different choices of the function $G$ give rise to equivalent RD-codes. Let's start by proving the following result.

\begin{lemma}
\label{forma}
Let $U$ be an $rn/2$-dimensional $\F_q$-subspace of the $r$-dimensional $\F_{q^{n}}$-space $\F_{q^{rn}}$.
Then there exists $\omega \in \F_{q^{rn}}\setminus \F_{q^{rn/2}}$ such that $$U=\{x+\omega f(x) \colon x\in \F_{q^{rn/2}}\}$$ where $f(x)$ is a $q$-polynomial over $\F_{q^{rn/2}}$.
\end{lemma}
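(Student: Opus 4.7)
Since $\F_{q^{rn/2}}$ is a subfield of $\F_{q^{rn}}$ of index $2$, every $\omega\in\F_{q^{rn}}\setminus\F_{q^{rn/2}}$ yields an $\F_{q^{rn/2}}$-basis $\{1,\omega\}$ of $\F_{q^{rn}}$, hence a direct sum decomposition $\F_{q^{rn}}=\F_{q^{rn/2}}\oplus \omega\F_{q^{rn/2}}$ of $\F_q$-spaces. The plan is to choose $\omega$ so that $U$ is the graph of an $\F_q$-linear map with respect to this decomposition; for then automatically $U=\{x+\omega f(x) : x\in\F_{q^{rn/2}}\}$, and the resulting $\F_q$-linear endomorphism $f$ of $\F_{q^{rn/2}}$ is represented by a $q$-polynomial over $\F_{q^{rn/2}}$, which is exactly the desired conclusion.

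The crux is therefore to find $\omega\in\F_{q^{rn}}\setminus\F_{q^{rn/2}}$ with $U\cap \omega\F_{q^{rn/2}}=\{0\}$, and I would establish this by a pigeonhole count. The one-dimensional $\F_{q^{rn/2}}$-subspaces of $\F_{q^{rn}}$ are exactly $q^{rn/2}+1$ in number, and their nonzero parts partition $\F_{q^{rn}}^*$ into classes of size $q^{rn/2}-1$. Since $|U\setminus\{0\}|=q^{rn/2}-1$, at most $q^{rn/2}-1$ of these subspaces can meet $U\setminus\{0\}$, so at least two of them are disjoint from $U\setminus\{0\}$. At most one of those two equals $\F_{q^{rn/2}}$, which provides an admissible $\omega$.

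Once such $\omega$ is fixed, let $\pi_1,\pi_2\colon \F_{q^{rn}}\to\F_{q^{rn/2}}$ denote the $\F_q$-linear projections associated with $\F_{q^{rn}}=\F_{q^{rn/2}}\oplus \omega\F_{q^{rn/2}}$. The restriction $\pi_1|_U$ has kernel $U\cap \omega\F_{q^{rn/2}}=\{0\}$, and since $\dim_{\F_q}U=\dim_{\F_q}\F_{q^{rn/2}}=rn/2$, it is an $\F_q$-linear isomorphism. Setting $f:=\pi_2\circ(\pi_1|_U)^{-1}$, every $u\in U$ decomposes as $u=\pi_1(u)+\omega\pi_2(u)=x+\omega f(x)$ with $x=\pi_1(u)\in\F_{q^{rn/2}}$, giving the required parametrisation. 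The only non-routine step is the pigeonhole argument securing a good $\omega$; everything else is direct linear algebra.
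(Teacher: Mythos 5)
Your proof is correct and follows essentially the same route as the paper: a pigeonhole count over the $q^{rn/2}+1$ one-dimensional $\F_{q^{rn/2}}$-subspaces of $\F_{q^{rn}}$ (equivalently, the multiplicative cosets of $\F_{q^{rn/2}}^*$) yields an $\omega\notin\F_{q^{rn/2}}$ with $U\cap\omega\F_{q^{rn/2}}=\{0\}$, after which $U$ is the graph of an $\F_q$-linear map, hence of a $q$-polynomial over $\F_{q^{rn/2}}$. The only cosmetic difference is that the paper counts at least $q-1$ elements of $U^*$ in each coset meeting $U$, whereas your cruder ``at least one element'' bound already suffices.
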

\begin{proof}
Observe that $\F_{q^{rn}}^*=\bigcup_{a\in\F_{q^{rn}}^*}a\F_{q^{rn/2}}^*$ and for any $a,b\in\F_{q^{rn}}^*$ either $a\F_{q^{rn/2}}^*\cap b\F_{q^{rn/2}}^*=\emptyset$ or $a\F_{q^{rn/2}}^*= b\F_{q^{rn/2}}^*$ and the latter case happens if and only if $\frac ab\in\F_{q^{rn/2}}^*$. Since $U^*\cap a\F_{q^{rn/2}}^*$ is either empty or contains at least $q-1$ elements and since $|U^*|=q^{\frac{rn}{2}}-1$, there exist $a,b\in\F_{q^{rn}}^*$, with $\frac ab\notin\F_{q^{rn/2}}$ such that $U^*\cap a\F_{q^{rn/2}}^*=U^*\cap b\F_{q^{rn/2}}^*=\emptyset$. We may assume $a\notin \F_{q^{rn/2}}^*$ and put $\omega:=a$. Then $U\cap \omega \F_{q^{rn/2}}=\{0\}$ and taking into account that $U$ has rank $\frac{rn}2$ and $\{1,\omega\}$ is an $\F_{q^{rn/2}}$--basis of $\F_{q^{rn}}$, we have $U=\{x+\omega f(x) \colon x\in \F_{q^{rn/2}}\}$ for some $q$-polynomial $f$ over $\F_{q^{rn/2}}$.
\end{proof}

Hence, we are able to prove the following

\begin{proposition}\label{prop:equivalenceG}
Let $U$ be an $rn/2$-dimensional $\F_q$-subspace of the $r$-dimen\-sional $\F_{q^n}$-space $V=V(r,q^n)$, $rn$ even, and let $G$ and $\overline{G}$ be two $\F_q$-linear functions determining two RD-codes $\cC_{U,G}$ and $\cC_{U,\overline{G}}$ as in Theorem \ref{construction}. Then $\cC_{U,G}$ and $\cC_{U,\overline{G}}$ are equivalent.
\end{proposition}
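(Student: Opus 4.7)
The plan is to exhibit an $\F_q$-linear automorphism $L_1$ of the common codomain $W$ such that $\overline{G}=L_1\circ G$; post-composition by $L_1$ will then carry $\cC_{U,G}$ bijectively onto $\cC_{U,\overline G}$, and the equivalence will be witnessed by the triple $(L_1,\mathrm{id}_{\F_{q^n}},\mathrm{id})$ in the sense of the definition recalled just before the proposition.

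First I would observe that, since $V=V(r,q^n)$ has $\F_q$-dimension $rn$, the kernels of both $G$ and $\overline G$ equal $U$ which has $\F_q$-dimension $rn/2$, and $W$ has $\F_q$-dimension $rn/2$, the rank-nullity theorem forces both $G$ and $\overline G$ to be surjective. Consequently each factors through the quotient $V/U$ as an $\F_q$-linear isomorphism onto $W$; writing these induced isomorphisms as $\tilde G,\tilde{\overline G}\colon V/U\to W$, I would set $L_1:=\tilde{\overline G}\circ\tilde G^{-1}\in\GL(W,\F_q)$. By construction $\overline G=L_1\circ G$, so for every $\bv\in V$ we have $\overline G\circ\tau_{\bv}=L_1\circ (G\circ\tau_{\bv})$, which gives $\cC_{U,\overline G}=\{L_1\circ f:f\in\cC_{U,G}\}$. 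Choosing $L_2:=\mathrm{id}_{\F_{q^n}}$ and $\rho:=\mathrm{id}$ then matches the definition of equivalence verbatim.

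I do not expect any substantive obstacle; the entire argument reduces to the first isomorphism theorem applied to two surjective $\F_q$-linear maps with the same kernel. The only point requiring a moment of care is confirming that $L_1$ is an actual automorphism of $W$, which is immediate from the dimension count $\dim_{\F_q}V-\dim_{\F_q}U=rn/2=\dim_{\F_q}W$. A minor bookkeeping comment is that when $r=2$ the domain and codomain of the maps in $\cC_{U,G}$ both have $\F_q$-dimension $n$; in that case one should appeal to Sheekey's equivalence from \cite{Sh} rather than to the definition stated here for the case of distinct dimensions, but post-composition by $L_1\in\GL(W,\F_q)$ still yields equivalence under either convention.
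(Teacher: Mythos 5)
Your proof is correct, and its engine is the same as the paper's: both arguments come down to producing an invertible $\F_q$-linear map on $W$ whose post-composition with $G$ gives $\overline{G}$, and then invoking the definition of equivalence with $L_2=\mathrm{id}$ and trivial field automorphism. The difference is in how that map is produced. The paper first invokes Lemma \ref{forma} to identify $V$ with $\F_{q^{rn/2}}\times\F_{q^{rn/2}}$ and $U$ with the graph $\{(x,f(x))\}$ of a $q$-polynomial, then decomposes $G(x,y)=G_0(x)-G_1(y)$ and $\overline{G}(x,y)=\overline{G}_0(x)-\overline{G}_1(y)$, deduces $G_0=G_1\circ f$, $\overline{G}_0=\overline{G}_1\circ f$ with $G_1,\overline{G}_1$ invertible, and takes $H=\overline{G}_1\circ G_1^{-1}$. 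You bypass the graph representation entirely: rank--nullity gives surjectivity of $G$ and $\overline{G}$, both factor through $V/U$ as isomorphisms $\tilde G,\tilde{\overline G}$ onto $W$, and $L_1=\tilde{\overline G}\circ\tilde G^{-1}$ does the job. Your route is more elementary and more general (it never uses that $U$ is a graph, nor Lemma \ref{forma}, which in the paper is also needed later for other purposes), while the paper's version has the mild benefit of exhibiting $H$ concretely in the coordinates used throughout Section 3. Your closing remark about $r=2$, where domain and codomain have equal dimension and the equivalence notion is the one from Sheekey rather than the displayed definition, is a legitimate point of care and is handled correctly, since post-composition by an element of $\GL(W,\F_q)$ is an equivalence under either convention.
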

\begin{proof}
Up to an isomorphism, we can always assume $V=\F_{q^{rn/2}}\times\F_{q^{rn/2}}$ and $W=\F_{q^{rn/2}}$. Then by Lemma \ref{forma} we have $U=\{(x,f(x)): x \in \mathbb{F}_{q^{\frac{rn}{2}}}\}$, where $f(x)$ is a $q$-polynomial over $\F_{q^{rn/2}}$. Then $G,\overline G:\F_{q^{rn/2}}\times\F_{q^{rn/2}}\rightarrow \F_{q^{rn/2}}$ are two $\F_q$-linear maps such that $U=\ker  G=\ker  \overline G$. We want to show that there exist two permutation $q$-polynomials $H$ and $L$ over $\mathbb{F}_{q^{rn/2}}$ and $\mathbb{F}_{q^n}$, respectively, and $\sigma \in Aut(\mathbb{F}_q)$ such that
\[ \mathcal{C}_{U,\overline{G}}=\{ H \circ (G \circ \tau_{\bf v})^\sigma \circ L \hspace{0.1cm} : \hspace{0.1cm} v \in \mathbb{F}_{q^{rn}} \}.\]
Let $G_0,G_1,\overline{G}_0,\overline{G}_1:\ \mathbb{F}_{q^{rn/2}}\rightarrow \mathbb{F}_{q^{rn/2}}$ be $\mathbb{F}_q-$linear maps such that
$$ G(x,y)=G_0(x)-G_1(y)\mbox{\quad and \quad}\overline{G}(x,y)=\overline{G}_0(x)-\overline{G}_1(y), $$
for all $x,y \in \mathbb{F}_{q^{rn/2}}$.
Since $\ker  G=\ker  \overline G=U$ it can be easily seen that $G_0=G_1\circ f$, $\overline{G}_0=\overline G_1\circ f$ and that $G_1$ and $\overline{G}_1$ are invertible maps. Hence, putting $H=\overline{G}_1\circ G_1^{-1}$, $\sigma=id_{\F_q}$ and $L=id_{\F_{q^n}}$, we have
\[H\circ G\circ \tau_\bv=\overline{G}\circ\tau_\bv,\]
for each $\bv=(x,y)\in V$, and hence the assertion follows.
\end{proof}

First we show some results in the case $r$ even. Starting with the following example for $r=2$, we examine further the codes defined in Theorem \ref{construction}. Later, in Theorem \ref{thm:new} we will also give a different construction of MRD-codes.

\begin{example}\label{example1}
Let $U_f=\{(x,f(x)) \colon x\in \F_{q^n}\}$ be a maximum scattered $\F_q$-subspace of the two-dimensional $\F_{q^n}$-space $V=\F_{q^n} \times \F_{q^n}$, where $f$ is a $q$-polynomial over $\F_{q^n}$. Let
\[G \colon (a,b) \in V \mapsto  f(a)-b \in \F_{q^n}.\]
Then $\ker G = U_f$ and Theorem \ref{construction} with $r=2$ yields the MRD-code consisting of the maps $G \circ \tau_{(a,b)}$, i.e.
\begin{equation}
\label{Fv}
\cC_{U_f,G}=\{ x\in \F_{q^n} \mapsto f(ax)-bx \in \F_{q^n} \colon  (a,b)\in \F_{q^n}\times \F_{q^n} \}.
\end{equation}
Note that the MRD-codes (\ref{Fv}) are the adjoints of the codes constructed by Sheekey in \cite[Sec. 5]{Sh}, see also after Remark \ref{rest}.
\end{example}

\begin{remark}
\label{rest}
Let $U$ be a maximum scattered $\F_q$-subspace of $V=V(2,q^{rn/2})$, $r$ even.
According to Proposition \ref{prop2}, $U$ is also a maximum scattered $\F_q$-subspace of $V$, considered as an $r$-dimensional $\F_{q^n}$-space.
Let $G$ be an $\F_q$-linear $V \rightarrow W=V(rn/2,q)$ map with $\ker G = U$.
When $V$ is viewed as an $\F_{q^n}$-space, then the construction method of Theorem \ref{construction} yields the MRD-code
\begin{equation}
\label{form:CU}
\cC_{U,G}=\{  x\in \F_{q^n} \mapsto G\circ \tau_{\bf v}(x) \in W \colon {\bf v} \in V \}.
\end{equation}
When $V$ is viewed as an $\F_{q^{rn/2}}$-space, then we obtain the MRD-code
\begin{equation}
\label{form:DU}
\cD_{U,G}=\{  x\in \F_{q^{rn/2}} \mapsto G\circ \tau_{\bf v}(x) \in W \colon {\bf v} \in V \}.
\end{equation}
Since $\F_{q^n}$ is a subfield of $\F_{q^{rn/2}}$, the latter code is the restriction of the former one on $\F_{q^n}$.

Conversely, it may happen, even if $r$ is even, that an $\F_q$-subspace $U$ of $V=V(r,q^n)$ of rank $rn/2$ is scattered with respect to $\F_{q^n}$ whereas it is not scattered when $V$ is considered  as a $2$-dimensional $\F_{q^{rn/2}}$-space. Arguing as above, the MRD-code $\cC_{U,G}$ described in (\ref{form:CU}) is the restriction of the RD-code $\cD_{U,G}$ described in (\ref{form:DU}).
\end{remark}

Let $\omega_\alpha$ be the map $\F_{q^{rn/2}}\rightarrow \F_{q^{rn/2}}$ defined by the rule $x\mapsto \alpha x$.
By $(\omega_\alpha + \omega_{\beta} \circ f)\mid_{\F_{q^n}}$ we denote the restriction of the corresponding function over $\F_{q^n}$.
From Example \ref{example1} and from Remark \ref{rest} it follows that if $r$ is even and $U_f=\{(x,f(x)) \colon x\in \F_{q^{rn/2}}\}$ is a maximum scattered $\F_q$-subspace of $\F_{q^{rn/2}}^2$ considered as an $r$-dimensional $\F_{q^n}$-space, then the MRD-code (cf. (\ref{Fv}), (\ref{form:CU}) and (\ref{form:DU}))
\[\cC_f=\{(\omega_\alpha + f \circ \omega_{\beta})\mid_{\F_{q^n}} \colon \alpha, \beta \in \F_{q^{rn/2}}\}\]
is the restriction on $\F_{q^n}$ of the MRD-code
\[\cD_f=\{(\omega_\alpha + f \circ \omega_{\beta}) \colon \alpha, \beta \in \F_{q^{rn/2}}\}.\]
The next result shows that $\{(\omega_\alpha + \omega_{\beta} \circ f)\mid_{\F_{q^n}}\colon \alpha, \beta \in \F_{q^{rn/2}}\}$ is also an MRD-code with the same parameters as $\cC_f$. For $r=2$ this is exactly the code defined by Sheekey.

\begin{theorem}\label{thm:new}
Let $r$ be even and $U_f:=\{(x,f(x)) \colon x\in \F_{q^{rn/2}}\}$ be a maximum scattered $\F_q$-subspace of $\F_{q^{rn/2}}^2$ considered as $V(r,q^n)$,
where $f$ is a $q$-polynomial over $\F_{q^{rn/2}}$.
Then ${\cal S}_f:=\{(\omega_\alpha + \omega_{\beta} \circ f)\mid_{\F_{q^n}}\colon \alpha, \beta \in \F_{q^{rn/2}}\}$ is an
MRD-code with parameters $(rn/2,n,q;n-1)$.
\end{theorem}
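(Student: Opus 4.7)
The plan is to verify that ${\cal S}_f$ meets all three properties of an MRD-code with parameters $(rn/2,n,q;n-1)$: namely, $|{\cal S}_f|=q^{rn}$, the minimum distance is at least $n-1$, and the Singleton bound is met with equality. The two substantive items are injectivity of the parametrization and the rank lower bound, and both will be obtained by the same geometric observation using that $U_f$ is scattered with respect to $\F_{q^n}$.

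First I would show that the parametrization $(\alpha,\beta)\mapsto (\omega_\alpha+\omega_\beta\circ f)|_{\F_{q^n}}$ is injective. Suppose $(\alpha,\beta)\neq(0,0)$ and $\alpha x+\beta f(x)=0$ for every $x\in\F_{q^n}$. If $\beta=0$ then $\alpha=0$, a contradiction, so $\beta\neq 0$ and setting $c:=-\alpha/\beta\in\F_{q^{rn/2}}$ we have $f(x)=cx$ for all $x\in\F_{q^n}$. The key remark is that $c\in\F_{q^{rn/2}}$ makes $(1,c)$ a legitimate vector of $V=V(r,q^n)$, and for every $x\in\F_{q^n}$ the element $(x,f(x))=x\cdot(1,c)\in U_f$ lies in $\langle(1,c)\rangle_{\F_{q^n}}$. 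Hence the whole $\F_q$-subspace $\F_{q^n}\cdot(1,c)$, of $\F_q$-dimension $n\geq 2$, is contained in $U_f\cap \langle(1,c)\rangle_{\F_{q^n}}$, contradicting the scattered condition. This gives $|{\cal S}_f|=q^{rn}$.

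Next I would bound the minimum distance from below by the same idea. Suppose $(\alpha,\beta)\neq(0,0)$ produces a codeword whose $\F_q$-kernel contains two $\F_q$-linearly independent elements $x_1,x_2\in\F_{q^n}$. As above one obtains $\beta\neq0$ and $f(x_i)=cx_i$ with $c=-\alpha/\beta$, so both $(x_i,f(x_i))=x_i\cdot(1,c)$ lie in $U_f\cap\langle(1,c)\rangle_{\F_{q^n}}$ and, since their first coordinates are $\F_q$-independent, they themselves are $\F_q$-independent. This again contradicts that $U_f$ is scattered. Therefore every nonzero element of ${\cal S}_f$ has $\F_q$-kernel of dimension at most $1$, so its rank is at least $n-1$.

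Finally, since $r$ is even and $r\ge2$, the codewords are $\F_q$-linear maps $\F_{q^n}\to\F_{q^{rn/2}}$, represented by $rn/2\times n$ matrices, and the Singleton bound reads $|{\cal S}_f|\leq q^{(rn/2)(n-d+1)}$. Combining with $|{\cal S}_f|=q^{rn}$ forces $d\leq n-1$, and together with the lower bound $d\geq n-1$ gives $d=n-1$; equality in the Singleton bound then yields that ${\cal S}_f$ is an MRD-code with parameters $(rn/2,n,q;n-1)$. The main potential pitfall is keeping straight the two algebraic structures on $\F_{q^{rn/2}}^2$, namely its natural $\F_{q^{rn/2}}$-structure and the $\F_{q^n}$-structure of $V(r,q^n)$; once one notices that scalars $x\in\F_{q^n}$ and vectors $(1,c)$ with $c\in\F_{q^{rn/2}}$ are admissible in the latter, the scattered hypothesis applies directly and the whole argument reduces to these two clean contradictions.
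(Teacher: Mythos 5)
Your proof is correct and follows essentially the same route as the paper: you use the scattered hypothesis to show that a nonzero codeword $x\mapsto \alpha x+\beta f(x)$ restricted to $\F_{q^n}$ cannot vanish identically nor have an $\F_q$-kernel of dimension at least $2$, and then conclude via the Singleton bound. The only difference is presentational: the paper encodes scatteredness through the conditions ``$f(\lambda y)=\lambda f(y)$ with $\lambda\in\F_{q^n}$ forces $\lambda\in\F_q$'' and its quotient form, while you exhibit the point $\langle(1,c)\rangle_{\F_{q^n}}$ with $c=-\alpha/\beta$ as having weight at least $2$ in $L_{U_f}$ --- the same contradiction, and your explicit Singleton computation just spells out what the paper leaves implicit.
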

\begin{proof}
Since $U_f$ is scattered, the following holds.
If $(x,f(x))=\lambda(y,f(y))$ with $\lambda \in \F_{q^n}$, then $\lambda\in \F_q$, so for each $y\in \F_{q^{rn/2}}^*$
\begin{equation}
\label{scatt}
f(\lambda y)=\lambda f(y) \text{ with } \lambda \in \F_{q^n} \text{ implies } \lambda \in \F_q.
\end{equation}
It also follows that for each $y\in \F_{q^{rn/2}}^*$ we have
\begin{equation}
\label{scatt2}
f(\lambda y)/\lambda y = f(y)/y \text{ for some } \lambda \in \F_{q^n}^* \text{ if and only if } \lambda \in \F_q^*.
\end{equation}

First we show that $(\alpha x + \beta f(x))\mid_{\F_{q^n}}=0$ implies $\alpha=\beta=0$.
Suppose the contrary. If $\beta\neq 0$, then $f(x)= x t$, with $t=-\alpha/\beta$ for each $x\in \F_{q^n}$, contradicting \eqref{scatt}.
If $\beta=0$, then clearly also $\alpha=0$. It follows that $|{\cal S}_f|=q^{rn}$.

The $\F_q$-linear map $(\alpha x + \beta f(x))\mid_{\F_{q^n}}$ has rank less than $n-1$ if and only if $\beta\neq 0$ and there exist $x,y\in \F_{q^n}^*$ such that $\la x \ra_{\F_q} \neq \la y \ra_{\F_q}$ and $f(x)/x=f(y)/y=-\alpha/\beta$.
But then for $\lambda:=x/y \in \F_{q^n}\setminus \F_q$ we have $f(\lambda y)/\lambda y=f(y)/y$ contradicting \eqref{scatt2}.
\end{proof}

Sheekey in \cite[Theorem 8]{Sh} showed that when $r=2$ the two $\F_q$--vector subspaces $U_f$ and $U_g$ defined as in Theorem \ref{thm:new} are equivalent under the action of the group $\Gamma \mathrm{L}(2,q^n)$ if and only if $\cS_f$ and $\cS_g$ are equivalent as MRD-codes. Here we will show that the same result is not true when we consider the restriction codes.
To show this we will need the following two examples, where non-equivalent $\F_q$-subspaces yield the same MRD-code.

\begin{example}
\label{const1}
Consider $U_f=\{(x,f(x)) \colon x \in \F_{q^{tn}}\}$, with $t \geq 1$, $n \geq 3$ and
with $f \colon \F_{q^{tn}}\rightarrow \F_{q^{tn}}$ an invertible $\F_{q^n}$-semilinear map with associated automorphism $\sigma \in \mathrm{Aut}(\F_{q^n})$ such that $\mathrm{Fix}(\sigma)=\F_q$. Then $L_{U_f}$ is a scattered $\F_q-$linear set of pseudoregulus type in $\PG(2t-1,q^n)$ (cf. \cite[Sec. 3]{LuMaPoTr2014}).
With this choice of $f$, we get
\[\cS_f=\{(\omega_\alpha+\omega_{\beta} \circ id^\sigma)\mid_{\F_{q^n}} \colon \alpha, \beta \in \F_{q^{tn}}\}.\]
Indeed, for every $\lambda \in \F_{q^n}$ we have
$(\omega_\alpha+\omega_\beta \circ f)(\lambda)=\alpha \lambda+\beta f(\lambda)=\alpha\lambda+\beta\lambda^\sigma f(1)$.
\end{example}

\begin{example}
\label{const2}
Let $W=\{(x,y,x^q,y^{q^h}) \colon x,y \in \F_{q^n}\}$, with $n \geq 5$, $1 < h < n-1$ and with $\gcd(h,n)=1$.
Then $W$ is a scattered $\F_q$-subspace of $V(4,q^n)$ and it defines an $\F_q$-linear set $L_W$ of $\PG(3,q^n)$, which is not of pseudoregulus type, see \cite[Proposition 2.5]{LaMaPoTr2013}.
We may consider $V(4,q^n)$ as $\F_{q^{2n}}\times \F_{q^{2n}}$. Take $\omega \in \F_{q^{2n}}\setminus \F_{q^n}$, so $\{1,\omega\}$ is an $\F_{q^n}$-basis of $\F_{q^{2n}}$ and
\[W=\{(x+\omega y, x^q+\omega y^{q^h}) \colon x,y \in \F_{q^n}\}.\] Direct computations show that $W=\{(z,g(z)) \colon z \in \F_{q^{2n}} \}=U_g$, where $g$ is the $q$-polynojmial over $\F_{q^{2n}}$ of the form
$$g(z)=a_1z^q+a_h z^{q^h}+(1-a_1)z^{q^{n+1}}-a_hz^{q^{n+h}},$$
with $a_1= \frac{\omega^{q^{n+1}}}{\omega^{q^{n+1}}-\omega^q} $ and $a_h= \frac{1}{\omega^{q^h-1}-\omega^{q^{h+n}-1}}$.
Hence $g(z)\mid_{\F_{q^n}}=z^q$,
so
\[\cS_g=\{(\omega_\alpha + \omega_\beta \circ id^q)\mid_{\F_{q^n}} \colon \alpha, \beta \in \F_{q^{2n}}\}.\]
\end{example}

\begin{theorem}
In $V(4,q^n)$, $n\geq 5$, there exist two non-equivalent maximum scattered $\F_q$-subspaces $U_f$ and $U_g$ such that the codes $\cS_f$ and $\cS_g$ coincide.
\end{theorem}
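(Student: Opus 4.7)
The plan is to combine the two preceding examples by instantiating Example \ref{const1} with $t=2$ and automorphism $\sigma$ equal to the $q$-power Frobenius of $\F_{q^n}$, and pairing the resulting $U_f$ with $U_g := W$ from Example \ref{const2}. Both subspaces live in $V(4,q^n)$, and I will show that the restricted MRD-codes they produce coincide, while the subspaces themselves are geometrically distinguishable.

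To verify $\cS_f=\cS_g$, note first that $\sigma\colon x\mapsto x^q$ has fixed field exactly $\F_q$, so it is a legitimate choice in Example \ref{const1}. The $\F_{q^n}$-semilinearity of $f$ then yields $f(\lambda)=\lambda^\sigma f(1)=\gamma\lambda^q$ for every $\lambda\in\F_{q^n}$, where $\gamma:=f(1)\in\F_{q^{2n}}^*$. Consequently,
\[
(\omega_\alpha+\omega_\beta\circ f)\mid_{\F_{q^n}}(\lambda)\;=\;\alpha\lambda+\beta\gamma\,\lambda^q.
\]
Since $\gamma\neq 0$, as $(\alpha,\beta)$ ranges over $\F_{q^{2n}}\times\F_{q^{2n}}$ the reparametrized pair $(\alpha,\beta\gamma)$ ranges over the same set, and hence $\cS_f=\{\lambda\mapsto\alpha\lambda+\beta\lambda^q\colon\alpha,\beta\in\F_{q^{2n}}\}\mid_{\F_{q^n}}$. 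This matches exactly the description of $\cS_g$ obtained at the end of Example \ref{const2}, giving $\cS_f=\cS_g$.

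For the non-equivalence of $U_f$ and $U_g$, I would invoke the existing geometric classification. By Example \ref{const1}, $L_{U_f}$ is an $\F_q$-linear set of pseudoregulus type in $\PG(3,q^n)$; by Example \ref{const2} together with \cite[Proposition 2.5]{LaMaPoTr2013}, $L_{U_g}$ is not of pseudoregulus type. Since this property is invariant under the natural action of $\PGaL(4,q^n)$ on linear sets, $L_{U_f}$ and $L_{U_g}$ cannot be $\PGaL$-equivalent; therefore the underlying $\F_q$-subspaces $U_f$ and $U_g$ cannot be $\G(4,q^n)$-equivalent.

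The main subtlety I expect lies in the first step: the absorption of the scalar $\gamma=f(1)$ into the second parameter, which requires that the map $\beta\mapsto\beta\gamma$ really is a bijection of $\F_{q^{2n}}$ and that the two resulting sets of linear maps coincide set-theoretically rather than only up to an overall twist. Once this is verified, the non-equivalence is a direct appeal to the cited classification, and the theorem follows provided the standing hypothesis $n\geq 5$ of Example \ref{const2} is in force.
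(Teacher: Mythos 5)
Your proposal is correct and follows essentially the same route as the paper: instantiate Example \ref{const1} with $t=2$ and $\sigma\colon x\mapsto x^q$, observe that the resulting code coincides with $\cS_g$ from Example \ref{const2} (the paper absorbs the nonzero scalar $f(1)$ into $\beta$ exactly as you do), and conclude non-equivalence of the subspaces from \cite[Proposition 2.5]{LaMaPoTr2013} since one linear set is of pseudoregulus type and the other is not. Your write-up merely makes explicit two steps the paper leaves implicit (the bijection $\beta\mapsto\beta\gamma$ and the $\PGaL$-invariance of the pseudoregulus property), which is harmless.
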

\begin{proof}
In Example \ref{const1} take $t=2$ and $\sigma \colon x \mapsto x^q$. Then we obtain the same code as in Example \ref{const2}, while
the corresponding subspaces are non-equivalent because of \cite[Proposition 2.5]{LaMaPoTr2013}.
\end{proof}

Let now $r$ be odd and $n=2t$. Some of the known families of maximum scattered $\F_q$-subspaces are given in the $r$-dimensional $\F_{q^{2t}}$-space $V=\F_{q^{2rt}}$ and they are of the form
\begin{equation}
U_f:=\{x\omega + f(x) \colon x\in \F_{q^{rt}}\},
\end{equation}
with $\omega \in \F_{q^{2t}} \setminus \F_{q^t}$ and with $\omega^2=\omega A_0+A_1$, $A_0,A_1\in\F_{q^t}$.
In this case we show an explicit construction of $\F_q$-linear MRD-codes with parameters $(rt,2t,q;2t-1)$ obtained from
Theorem \ref{construction}.
Indeed, in this case $\{\omega, 1\}$ is an $\F_{q^t}$-basis of $\F_{q^{2t}}$ and also an $\F_{q^{rt}}$-basis of $\F_{q^{2rt}}$.
Then we can write any element $\lambda\in\F_{q^{2t}}$ as $\lambda= \lambda_0 \omega +\lambda_1$, with $\lambda_0, \lambda_1 \in\F_{q^t}$.
We fix $G \colon \F_{q^{2rt}} \rightarrow \F_{q^{rt}}$ as the map $x \omega + y \mapsto f(x)-y$.
For each $v=v_0 \omega + v_1 \in \F_{q^{2rt}}$ the map $\tau_v \colon \F_{q^{2t}} \rightarrow \F_{q^{2rt}}$ is as follows
\[\lambda \mapsto \lambda_0 v_0 A_1 + \lambda_1 v_1 + \omega (\lambda_0 v_1 + \lambda_1 v_0 + \lambda_0 v_0 A_0),\]
and $\tau_v$ can be viewed as a function defined on $\F_{q^t}\times\F_{q^t}$.
Then the associated MRD-code consists of the following maps:
\[G \circ \tau_v \colon (x,y)\in \F_{q^t}\times \F_{q^t} \mapsto f(\lambda_0 v_1 + \lambda_1 v_0 + \lambda_0 v_0 A_0)- \lambda_0 v_0 A_1 - \lambda_1 v_1.\]

\begin{example}\label{ex1}
Put $f(x):=ax^{q^i}$,  $a\in\F_{q^{rt}}^*$, $1\leq i\leq rt-1$, $r$ odd. For any $q\geq 2$ and any integer $t\geq 2$ with $\gcd(t,r)=1$, such that\
\begin{enumerate}[(i)]
\item $\gcd(i,2t)=1$ and $\gcd(i,rt)=r$,
\item $\N_{q^{rt}/q^r}(a)\notin\F_q$,
\end{enumerate}
from \cite[Theorem 2.2]{BGMP2015}, we get the $\F_q$-linear MRD-code with dimension $2rt$ and parameters $(2t,rt,q;2t-1)$:
\[ \{ F_v \colon v=\omega v_0+v_1,\, v_0, v_1 \in \F_{q^{rt}} \}, \]
where $F_v \colon \F_{q^t}\times\F_{q^t}\rightarrow\F_{q^{rt}}$ is defined by the rule
\begin{equation}\label{for:cod1}
F_v(x,y)=x^{q^i}a(A_0^{q^i}v_0^{q^i}+v_1^{q^i})-xA_1v_0+y^{q^i}av_0^{q^i}-yv_1.
\end{equation}
Note that, since $\gcd(i,rt)=r$, the above MRD-code is $\F_{q^r}$-linear as well, since for each $\mu\in\F_{q^r}$ and $v\in\F_{q^{2rt}}$ we have $\mu F_v=F_{\mu v}$.
\end{example}

\begin{example}\label{ex2}
Put $f(x):=ax^{q^i}$,  $a\in\F_{q^{rt}}^*$, $1\leq i\leq rt-1$, $r$ odd. For any prime power $q\equiv 1 \pmod r$ and any integer $t\geq 2$, such that\
\begin{enumerate}[(i)]
\item $\gcd(i,2t)=\gcd(i,rt)=1$,
\item $\left(\N_{q^{rt}/q}(a)\right)^{\frac{q-1}{r}}\ne 1$,
\end{enumerate}
from \cite[Theorem 2.3]{BGMP2015}, we get the $\F_q$-linear MRD-code with dimension $2rt$ and parameters $(2t,rt,q;2t-1)$:
\[ \{ F_v \colon v=\omega v_0+v_1,\, v_0, v_1 \in \F_{q^{rt}} \}, \]
where $F_v \colon \F_{q^t}\times\F_{q^t}\rightarrow\F_{q^{rt}}$ is defined by the same rule as \eqref{for:cod1}.
\end{example}

\begin{example}\label{ex3}
Put $f(x):=ax^{q^i}+bx^{q^{2t+i}}$,  $a,b\in\F_{q^{3t}}^*$, $1\leq i\leq 3t-1$ (here $r=3$). For any $q\geq 2$ and any integer $t\geq 2$ with $\gcd(i,2t)=1$ choosing $a,b$ as in the proof of Theorem \ref{thm:1}, we get the $\F_q$-linear MRD-code with dimension $6t$ and parameters $(2t,3t,q;2t-1)$:
\[ \{ F_v \colon v=v_0+\omega v_1,\, v_0, v_1 \in \F_{q^{3t}} \}, \]
where $F_v:\, \F_{q^t}\times\F_{q^t}\rightarrow\F_{q^{3t}}$ is defined by the rule
\[F_v(x,y)=x^{q^i}(aA_0^{q^i}v_0^{q^i}+av_1^{q^i}+bA_0^{q^i}v_0^{q^{2t+i}}+bv_1^{q^{2t+i}})+\]
\[y^{q^i}(av_0^{q^i}+bv_0^{q^{2t+i}})-xA_1v_0-yv_1.\]
\end{example}

Applying \cite[Theorem 3.1]{BGMP2015} one can construct other MRD-codes after decomposing $V(r,q^n)$
into a direct sum of $\F_{q^n}$-subspaces of dimensions 2 and 3 and choosing for each of them a maximum scattered subspace.

\bigskip

\noindent Bence Csajb\'ok\\
Dipartimento di Matematica e Fisica,\\
Universit\`a degli Studi della Campania ``Luigi Vanvitelli'',\\
I--\,81100 Caserta, Italy\\
and \\
MTA--ELTE Geometric and Algebraic Combinatorics Research Group, \\
E\"otv\"os Lor\'and University, \\
H--\,1117 Budapest, P\'azm\'any P\'eter S\'et\'any 1/C, Hungary\\
{{\em csajbok.bence@gmail.com}}

\bigskip

\noindent Giuseppe Marino, Olga Polverino and Ferdinando Zullo\\
Dipartimento di Matematica e Fisica,\\
Universit\`a degli Studi della Campania ``Luigi Vanvitelli'',\\
I--\,81100 Caserta, Italy\\
{{\em giuseppe.marino@unina2.it}, {\em olga.polverino@unina2.it}, {\em ferdinando.zullo@unina2.it}}


\begin{thebibliography}{pippo}

\bibitem{BBL2000} {\sc S. Ball, A. Blokhuis and M. Lavrauw}: Linear $(q+1)$-fold blocking sets in $PG(2,q^4)$,
{\it Finite Fields Appl.} {\bf 6} n. 4 (2000), 294--301.

\bibitem{BGMP2015}
{\sc D. Bartoli, M. Giulietti, G. Marino and O. Polverino:} Maximum scattered linear sets and complete caps in Galois spaces, \href{http://arxiv.org/abs/1512.07467}{http://arxiv.org/abs/1512.07467}, to appear in {\it Combinatorica}.

\bibitem{berger}
{\sc T. Berger:} Isometries for rank distance and permutation group of Gabidulin codes, {\it IEEE Trans. Inform. Theory} {\bf 49} (2003), 3016-3019.

\bibitem{BL2000} {\sc A. Blokhuis and M.  Lavrauw}: Scattered spaces with respect to a
spread in $PG(n,q)$, {\it  Geom. Dedicata} {\bf  81}  No.1--3
(2000), 231--243.

\bibitem{BL} {\sc A. Blokhuis and M.  Lavrauw}: On two-intersection sets with respect to hyperplanes in projective
spaces, {\it J. Combin. Theory Ser. A}, {\bf 99} No.2 (2002),
377--382.

\bibitem{BP2005TC}{\sc G. Bonoli and O. Polverino}: The twisted cubic of $PG(3,q)$ and translation spreads of $H(q)$,
  {\it Discrete Math.} {\bf 296} (2005), 129--142.

\bibitem{BP2005} {\sc G. Bonoli and O. Polverino}: $\F_q$-linear
blocking sets in $PG(2,q^4)$, {\it Innov. Incidence Geom.} (2005),
35--56.

\bibitem{CK}{\sc R. Calderbank and W.M. Kantor}: The geometry of two-weight codes, {\it Bull. Lond. Math. Soc.}
 {\bf 18} (1986), 97--122.

\bibitem{CLPT} {\sc I. Cardinali, G. Lunardon, O. Polverino and R. Trombetti}: Translation Spreads of the
Classical Generalized Hexagon, {\it European J. Combin.} {\bf 23}
(2002), 367--376.

\bibitem{CPT} {\sc I. Cardinali, O. Polverino and R. Trombetti:} Semifield planes of order $q^4$ with kernel $\F_{q^2}$ and center $\F_q$, {\it European J. Combin.} {\bf 27} (2006), 940--961.

\bibitem{CSMP2016} {\sc B. Csajb\'ok, G. Marino and O. Polverino:} Classes and equivalence of linear sets in $\mathrm{PG}(1,q^n)$. Submitted manuscript.
\href{https://arxiv.org/abs/1607.06962}{https://arxiv.org/abs/1607.06962}

\bibitem{CSZ2015} {\sc B. Csajb\'ok and C. Zanella:} On the equivalence of linear sets, {\it Des. Codes Cryptogr.}, DOI 10.1007/s10623-015-0141-z.

\bibitem{CSZ2016} {\sc B. Csajb\'ok and C. Zanella:} On scattered linear sets of pseudoregulus type in $PG(1,q^t)$, {\it Finite Fields Appl.}, {\bf 41} (2016), 34--54.


\bibitem{Delsarte} {\sc P. Delsarte:} Bilinear forms over a finite field, with applications to coding theory, {\it J.\ Combin.\ Theory Ser.\ A} {\bf 25} (1978), 226--241.

\bibitem{EMPT1} {\sc G. L. Ebert, G. Marino, O. Polverino and R. Trombetti:} Infinite families of new semifields, {\it Combinatorica}, \textbf{29} n.6 (2009), 637--663.

\bibitem{Gabidulin}{\sc E. Gabidulin:} Theory of codes with maximum rank distance, {\it Probl. Inf. Transm.} {\bf 21}(3) (1985), 3--16.

\bibitem{gabidulin_public-key_1995} {\sc E. Gabidulin.:} Public-key cryptosystems based on linear codes over large alphabets: efficiency and weakness. {\it Codes and {Cyphers}}, 17--31. Formara Limited 1995.

\bibitem{gadouleau_properties_2006} {\sc M.~Gadouleau and Z.~Yan:} Properties of codes with the rank metric. {\it IEEE Global Telecommunications Conference 2006}, 1--5.

\bibitem{Glynn} {\sc D. Glynn and G. Steinke}: Laguerre planes of even order and translation ovals, {\it Geom. Dedicata} \textbf{51} (1994),
105--112.


\bibitem{kshevetskiy_new_2005}{\sc A.~Kshevetskiy and E.~Gabidulin:} The new construction of rank codes, International {Symposium} on {Information} {Theory}, 2005. {ISIT} 2005. {Proceedings}, pages 2105--2108, Sept. 2005.

\bibitem{koetter_coding_2008} {\sc R.~Koetter and F.~Kschischang:} Coding for errors and erasure in random network coding. {\it IEEE Trans. Inform. Theory}, 54(8):3579--3591, Aug. 2008.

\bibitem{LPhdThesis} {\sc M. Lavrauw:} {\it Scattered Spaces with respect to Spreads and Eggs in Finite Projective
Spaces}, Ph.D. Thesis, 2001.

\bibitem{Lavrauw} {\sc M. Lavrauw:} Scattered spaces in Galois Geometry, {\it Contemporary Developments in Finite Fields and Applications}, 2016, 195--216.

\bibitem{LMPT} {\sc M. Lavrauw, G. Marino, O. Polverino and R. Trombetti}: $\F_q$--pseudoreguli of $PG(3,q^3)$ and scattered semifields of order $q^6$,
{\it Finite Fields Appl.}, {\bf 17} (2011), 225--239.

\bibitem{LaMaPoTr2013} {\sc M. Lavrauw, G. Marino, O. Polverino and R. Trombetti:} Solution to an isotopism question concerning rank 2 semifields, {\it J. Combin. Des.}, \textbf{23} (2015), 60--77.

\bibitem{LaMaPoTr2013-1} {\sc M. Lavrauw, G. Marino, O. Polverino and R. Trombetti:} The isotopism problem of a class of 6-dimensional rank 2 semifields and its solution, {\it Finite Fields Appl.} \textbf{34} (2015) , 250--264.

\bibitem{LaVa2010} {\sc M. Lavrauw and G. Van de Voorde:} On linear sets on a projective line, {\it Des.\ Codes Cryptogr.} {\bf 56} (2010), 89--104.

\bibitem{LV} {\sc M. Lavrauw and G. Van de Voorde}: Scattered linear sets and pseudoreguli, {\it Electron. J. Combin.} {\bf 20}(1) (2013).

\bibitem{LV2013} {\sc M. Lavrauw and G. Van de Voorde:} Field reduction and linear sets in finite geometry,
in: Gohar Kyureghyan, Gary L. Mullen, Alexander Pott (Eds.), Topics in Finite Fields, Contemp.\ Math.\, AMS (2015).

\bibitem{lidl_finite_1997}{\sc R.~Lidl and H.~Niederreiter:} Finite fields, volume~20 of {\em Encyclopedia of Mathematics and its Applications}. Cambridge University Press, Cambridge, second edition, 1997.

\bibitem{LP2000}{\sc G.~Lunardon and O.~Polverino:} Blocking sets of size $q^t+q^{t-1}+1$, {\it J. Combin. Theory Ser. A}
{\bf 90} (2000), 148--158.

\bibitem{L2001}{\sc G.~Lunardon:} Linear $k-$blocking sets, {\it Combinatorica} {\bf 21(4)} (2001), 571--581.

\bibitem{L2003} {\sc G. Lunardon:} Translation ovoids, {\it J. Geom.} {\bf 76} (2003), 200--215.

\bibitem{LuMaPoTr2014} {\sc G. Lunardon, G. Marino, O. Polverino and R. Trombetti:} Maximum scattered linear sets of pseudoregulus type
and the Segre Variety ${\cal S}_{n,n}$, J.\ Algebraic.\ Combin.\ \textbf{39} (2014), 807--831.

\bibitem{LuPo2004} {\sc G. Lunardon and O. Polverino:} Translation ovoids of orthogonal polar spaces, {\it Forum Math.} {\bf16} (2004), 663--669.

\bibitem{LTZ}{\sc G. Lunardon, R. Trombetti and Y. Zhou:} Generalized Twisted Gabidulin Codes, \href{http://arxiv.org/abs/1507.07855}{http://arxiv.org/abs/1507.07855}.

\bibitem{MaPo2015}{\sc G. Marino and O. Polverino}: On translation spreads of $H(q)$, {\it J. Algebraic Combin.} {\bf 42} n.3 (2005), 725--744.

\bibitem{MPT2007} {\sc G. Marino, O. Polverino and R. Trombetti}: On ${\mathbb F}_q$--linear sets of $\PG(3,q^3)$ and semifields, {\it J. Combin.
Theory Ser. A} {\bf 114} (2007), 769--788.

\bibitem{morrison_equivalence_2013}{\sc K.~Morrison:} Equivalence for rank-metric and matrix codes and automorphism groups of gabidulin codes. {\it {IEEE} Trans. Inform. Theory}, {\bf 60} n.11 (2014), 7035--7046.

\bibitem{OP2010} {\sc O. Polverino:} Linear sets in finite projective spaces, {\it Discrete Math.} {\bf 310} (2010), 3096--3107.

\bibitem{PP} {\sc P. Polito and  O. Polverino:} On small blocking sets, {\it Combinatorica} {\bf 18} No.1 (1998),
133--137.


\bibitem{Sh} {\sc J. Sheekey:} A new family of linear maximum rank distance codes, Adv. Math. Commun. {\bf 10}(3) (2016), 475--488.

\end{thebibliography}
\end{document}